\newtheorem{theorem}{Theorem}[section]
\newtheorem{corollary}[theorem]{Corollary}
\newtheorem{lemma}[theorem]{Lemma}
\newtheorem{question}[theorem]{Question}
\newtheorem{proposition}[theorem]{Proposition}
\newtheorem*{definition*}{Definition}
\def\C{\mathcal{C}}
\def\B{\mathcal{B}}
\def\E{E}
\def\E{\mathbb{E}}
\def\C{\mathcal{C}}
\def\C{\mathcal{C}}
\def\B{\mathcal{B}}
\begin{document}

\title{Geometric structures in pseudo-random graphs}
\author{Thang Pham\thanks{University of Science, Vietnam National University, Hanoi. Email: thangpham.math@vnu.edu.vn}\and Steven Senger\thanks{Department of Mathematics, Missouri State University. Email: StevenSenger@MissouriState.edu} \and Michael Tait\thanks{Department of Mathematics \& Statistics, Villanova University. Email: michael.tait@villanova.edu}\and Vu Thi Huong Thu\thanks{University of Science, Vietnam National University, Hanoi. Email: VuThiHuongThu\_T64@hus.edu.vn}}
\maketitle
\begin{abstract}
In this paper, we provide a general framework for counting geometric structures in pseudo-random graphs. As applications, our theorems recover and improve several results on the finite field analog of questions originally raised in the continuous setting. The results present interactions between discrete geometry, geometric measure theory, and graph theory.
\end{abstract}
\tableofcontents
\section{Introduction}
Let $\mathbb{F}_q$ be a finite field of order $q$ where $q$ is a prime power. The investigation of finite field analogs of problems originally raised in geometric measure theory has a long tradition, for instance, the Erd\H os-Falconer distance problem \cite{EF1, EF2, EF3}, sum-product estimates \cite{SP1, SP2}, the Kakeya problem \cite{KP1, KP2}, frame theory \cite{FT1, FT2}, and restriction problems \cite{RT5, RT3,  RT2, RT1, RT4}. Studying these problems over finite fields is not only interesting by itself, but it also offers new ideas to attack the original questions. Some of these problems can be proved by using results from the in graph theory, for instance, in \cite{EF3}, Iosevich and Rudnev proved the following theorem on the distribution of distances in a given set. 
\begin{theorem}[Iosevich-Rudnev, \cite{EF3}]\label{distance-1}
Let $E$ be a set in $\mathbb{F}_q^d$. Assume that $|E|\gg q^{\frac{d+1}{2}}$, then $$\Delta(E):=\{||x-y||=(x_1-y_1)^2+\cdots+(x_d-y_d)^2\colon x, y\in E\}=\mathbb{F}_q.$$
\end{theorem}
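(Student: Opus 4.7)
The plan is to show that for every $t \in \mathbb{F}_q$ the counting function $\nu(t) := |\{(x,y) \in E \times E : \|x-y\| = t\}|$ is strictly positive; the case $t=0$ is trivial (take $x=y$), so the substantive range is $t \in \mathbb{F}_q^*$. I would first use orthogonality of additive characters to decompose
$$\nu(t) = \frac{|E|^2}{q} + \frac{1}{q}\sum_{s \in \mathbb{F}_q^*}\chi(-st)\sum_{x,y \in E}\chi\bigl(s\|x-y\|\bigr),$$
where $\chi$ is a fixed nontrivial additive character of $\mathbb{F}_q$. The first term is the desired main term; the task reduces to showing that the $s\neq 0$ contribution is of lower order.

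For each fixed $s \neq 0$, I would expand $\|x-y\| = \sum_{i=1}^d (x_i-y_i)^2$ and complete the square in each coordinate, so that the inner double sum becomes a weighted pairing of the Fourier transform $\widehat{E}(\xi) := \sum_{x \in E}\chi(-x\cdot\xi)$ with itself, multiplied by a product of $d$ one-dimensional Gauss sums $G(s)$ of modulus $q^{1/2}$. Schematically, this produces an identity of the form $\sum_{x,y\in E}\chi(s\|x-y\|) = q^{-d}G(s)^d \sum_\xi |\widehat{E}(\xi)|^2 \chi\bigl(-\|\xi\|/(4s)\bigr)$. Taking absolute values, using $|G(s)^d| = q^{d/2}$ together with Plancherel $\sum_\xi |\widehat{E}(\xi)|^2 = q^d|E|$, the total $s\neq 0$ error is bounded by a constant multiple of $q^{(d-1)/2}|E|$. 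This is dominated by the main term $|E|^2/q$ precisely when $|E|\gg q^{(d+1)/2}$, which is exactly our hypothesis. Hence $\nu(t)>0$ for every $t \in \mathbb{F}_q$, so $\Delta(E)=\mathbb{F}_q$.

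The main technical obstacle is the careful Gauss sum computation for $\sum_x\chi(s\sum_i x_i^2)$: the square-completion requires inverting $2$ (implicit that $q$ is odd), and the bookkeeping of the quadratic Gauss sum across the $d$ coordinates must be handled so that the error lands in a form to which Plancherel directly applies. The sharpness of the threshold $q^{(d+1)/2}$ is transparently visible from the balance between the $q^{d/2}$ factor from $|G(s)^d|$, the $q^d$ factor from Plancherel, and the $q^{-1}$ factor from character orthogonality; there is no slack to exploit inside this purely Fourier-analytic framework, which is why any improvement of the exponent typically requires additional structural or pseudo-random hypotheses on $E$.
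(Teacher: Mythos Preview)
Your overall strategy is the original Fourier-analytic argument of Iosevich--Rudnev, which is a different route from the one the paper sketches: the paper derives the result from the expander mixing lemma applied to the distance Cayley graph $\mathcal{DG}_\alpha$, using that this graph is $(q^d,\sim q^{d-1},\lambda)$ with $\lambda\le 2q^{(d-1)/2}$. The two packagings are intimately related---the eigenvalue bound for $\mathcal{DG}_\alpha$ \emph{is} the Fourier bound for the sphere---but the graph formulation has the advantage of applying verbatim to any function replacing $\|\cdot\|$ whose associated Cayley graph has comparable spectral gap.

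That said, there is a real gap in your error estimate. After your identity
\[
\sum_{x,y\in E}\chi\bigl(s\|x-y\|\bigr)=q^{-d}G(s)^{d}\sum_{\xi}|\widehat{E}(\xi)|^{2}\,\chi\!\bigl(-\|\xi\|/(4s)\bigr),
\]
you claim that ``taking absolute values, using $|G(s)^{d}|=q^{d/2}$ together with Plancherel'' gives an error $\ll q^{(d-1)/2}|E|$. That is off by $q^{1/2}$: triangle inequality in $s$ followed by Plancherel in $\xi$ gives only
\[
\frac{1}{q}\sum_{s\ne 0}q^{-d}\cdot q^{d/2}\cdot q^{d}|E|\;<\; q^{d/2}|E|,
\]
which beats the main term $|E|^{2}/q$ only once $|E|\gg q^{(d+2)/2}$, not $q^{(d+1)/2}$. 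The step you are missing is the cancellation in the $s$-sum: one must swap the order of summation and, for each fixed $\xi$, bound
\[
\sum_{s\ne 0}G(s)^{d}\chi\!\bigl(-st-\|\xi\|/(4s)\bigr)=G(1)^{d}\sum_{s\ne 0}\eta(s)^{d}\chi\!\bigl(-st-\|\xi\|/(4s)\bigr),
\]
with $\eta$ the quadratic character. This is a Kloosterman sum ($d$ even) or a Sali\'e sum ($d$ odd), bounded by $2q^{1/2}$; only after invoking that square-root cancellation does the error drop to $O(q^{(d-1)/2}|E|)$ and the threshold $q^{(d+1)/2}$ follow. In the paper's language this Kloosterman input is precisely what supplies $\lambda\le 2q^{(d-1)/2}$, so it cannot be bypassed.
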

It is well-known that this theorem can be reproved by using the famous expander mixing lemma. More precisely, the expander mixing lemma states that for an $(n, d, \lambda)$-graph $\mathcal{G}$, i.e. a regular graph with $n$ vertices, of degree $d$, and all other eigenvalues bounded in absolute value by $\lambda$, the number of edges in a given vertex set $U$, denoted by $e(U)$, is bounded from both above and below by the inequality
\[\left\vert e(U)-\frac{d|U|^2}{2n} \right\vert\le \lambda |U|.\]
To derive Theorem \ref{distance-1} from this estimate, for $\alpha\in \mathbb{F}_q^*$, one just needs to define the distance graph $\mathcal{DG}_\alpha$ with the vertex set $\mathbb{F}_q^d$ and there is an edge between two vertices $x$ and $y$ if and only if $||x-y||=\alpha$. It is not hard to check that $\mathcal{DG}_\alpha$ is a regular Cayley graph with $q^d$ vertices, of degree $\sim q^{d-1}$, and the second eigenvalue is bounded by $q^{\frac{d-1}{2}}$ by using Kloosterman sums \cite{KLS2, KLS1}. So, for any $\alpha\ne 0$, the expander mixing lemma implies directly that any vertex set $U$ of size at least $2q^{\frac{d+1}{2}}$ spans at least one edge. 

We observe that the argument above only made use of the pseudo-randomness properties of the graph, and once the eigenvalues were calculated ignored anything about $\mathbb{F}_q$ or the distance function. Because of this observation, this machinery provides a unified proof for a series of similar questions, for example, one can replace the distance function by bilinear forms \cite{Hart}, Minkowski distance function \cite{HIS}, or other functions \cite{Vinh}.

From this observation, it is very natural to ask what kind of finite field models can be extended to the graph setting? That is, what ``geometric structures" can we guarantee in a general graph with some pseudo-randomness condition? The main purpose of this paper is to provide three such configurations, and the three topics we present here can be viewed as generalizations of the Erd\H{o}s-Falconer distance conjectures, which have been studied intensively in the literature. Our theorems imply several results found previously as special cases. Moreover, as they rely on the pseudo-randomness of an underlying graph, they can be applied in a straightforward manner to other settings, such as modules over finite rings.

Throughout the paper we say that $G$ is an $(n, d, \lambda)$-colored graph with color set $D$ if it is a graph edge-colored with $|D|$ colors such that the subgraph of any fixed color is an $(n, d, \lambda)$-graph.

\subsection{Cartesian product structures}
We first start with the following question about finding rectangles in $\mathbb{F}_q^d$. 
\begin{question}
Let $E$ be a set in $\mathbb{F}_q^d$ and $\lambda, \beta\in \mathbb{F}_q^*$. How large does $E$ need to be to guarantee that there are four points $w, x, y, z\in E$ such that they form a rectangle of side lengths $\alpha$ and $\beta$, i.e.
\begin{equation}\label{eqmot}(w-x)\cdot (x-y)=0, ~(x-y)\cdot (y-z)=0, ~(y-z)\cdot(z-w)=0, ~(z-w)(w-x)=0,\end{equation}
and \begin{equation}\label{eqhai}||w-x||=||y-z||=\alpha, ~||x-y||=||z-w||=\beta.\end{equation}
\end{question}
Lyall and Magyar \cite{LM1} proved that for any $\delta\in (0, 1)$, there exists an integer $q_0=q_0(\delta)$ with the following property: if $q\ge q_0$ and $E\subset \mathbb{F}_q^{2d}$ with $|E|\ge \delta q^{2d}$, then $E$ contains four points $a, b, c$, and $d$ satisfying (\ref{eqmot}) and (\ref{eqhai}). This is the finite field model of a result in the same paper which states that for any given  rectangle $\mathcal{R}$ in $\mathbb{R}^{2d}$, if $S\subset \mathbb{R}^{2d}$ has positive Banach density, then there exists a threshold $\lambda_0=\lambda_0(S, \mathcal{R})$ such that $S$ contains an isometric copy of $\lambda \mathcal{R}$ for any $\lambda \ge \lambda_0$. Notice that the result in \cite{LM1} was actually proved in a more general form, for $d$-dimensional rectangles, though we state it here for $2$-dimensional rectangles. 

In the first theorem of this paper, we extend this result to a general graph setting. 

For two graphs $G$ and $H$, the {\em cartesian product of $G$ and $H$}, denoted by $G\square H$, is the graph where $V(G\square H) = V(G)\times V(H)$ and $(u_1, v_1) \sim (u_2, v_2)$ if and only if either $u_1=u_2$ and $\{v_1, v_2\} \in E(H)$ or $v_1=v_2$ and $\{u_1, u_2\} \in V(G)$. We use $S(x)$ to denote the indicator function of the set $S.$

\begin{theorem}\label{thm:CS}
Let $G_i$ be $(n_i ,d_i, \lambda_i)$-graphs with $1\le i\le 2$. Set $G=G_1\square G_2$. 
For any $0<\delta'<\delta<1$, there exists 
$\epsilon>0$ such that for any $S\subset V(G_1\square G_2)$ with $|S|\ge \delta |V(G_1\square G_2)|$, if $\max\left\{ \frac{\lambda_1}{d_1}, \frac{\lambda_2}{d_2}\right\} < \epsilon$, then 
\[N=\sum_{(u_1, u_2)\in E(G_1), (v_1,v_2)\in E(G_2)}S(u_1, v_1)S(u_1, v_2)S(u_2, v_1)S(u_2, v_2)>\delta'^4 n_1n_2d_1d_2.\]
\end{theorem}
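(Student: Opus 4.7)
The plan is to apply the expander mixing lemma (EML) twice — once in each factor — and close the estimate with a pair of Cauchy-Schwarz inequalities. Writing $S$ also for its indicator function, for $u\in V(G_1)$ set $S_u=\{v:S(u,v)=1\}$ and for $v\in V(G_2)$ set $S^v=\{u:S(u,v)=1\}$. Organise the count by the $G_1$-edge of each rectangle: for each ordered $(u_1,u_2)$ with $\{u_1,u_2\}\in E(G_1)$, the number of admissible ordered $(v_1,v_2)$ is exactly $2e_{G_2}(S_{u_1}\cap S_{u_2})$. Applying EML in $G_2$ to $T_{u_1,u_2}:=S_{u_1}\cap S_{u_2}$ gives the pointwise lower bound
\[
2e_{G_2}(T_{u_1,u_2})\ \ge\ \frac{d_2}{n_2}|T_{u_1,u_2}|^2\ -\ 2\lambda_2|T_{u_1,u_2}|.
\]

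I would then handle the two resulting sums separately. The linear piece is easy: interchanging order of summation and using $d_1$-regularity,
\[
\sum_{(u_1,u_2)\in E(G_1)}|T_{u_1,u_2}|\ =\ \sum_{v\in V(G_2)}2e_{G_1}(S^v)\ \le\ d_1|S|.
\]
For the quadratic piece, expand $|T_{u_1,u_2}|^2$ as a double sum over $(v_1,v_2)\in V(G_2)^2$ and swap: the inner sum over ordered adjacent $(u_1,u_2)$ becomes $2e_{G_1}(S^{v_1}\cap S^{v_2})$, to which I apply EML a second time, now in $G_1$. The residual sums over $V(G_2)^2$ are cleaned up by two nested applications of Cauchy-Schwarz via the identity $\sum_{v_1,v_2}|S^{v_1}\cap S^{v_2}|=\sum_u|S_u|^2$:
\[
\sum_{v_1,v_2\in V(G_2)}|S^{v_1}\cap S^{v_2}|^2\ \ge\ \frac{1}{n_2^2}\Bigl(\sum_u|S_u|^2\Bigr)^2\ \ge\ \frac{|S|^4}{n_1^2n_2^2},
\]
while the crude bound $\sum_u|S_u|^2\le n_2|S|$ absorbs the linear correction introduced at this step.

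Substituting $|S|=\delta n_1n_2$ and collecting contributions, the main term is $\delta^4 d_1d_2n_1n_2$ and all error terms are bounded by a constant times $\delta\cdot\max\{\lambda_1/d_1,\lambda_2/d_2\}\cdot d_1d_2n_1n_2$. Choosing $\epsilon$ so that this error is strictly smaller than $(\delta^4-\delta'^4)d_1d_2n_1n_2$ closes the argument. The main obstacle is bookkeeping rather than depth: since the EML bound $(d/n)|U|^2-2\lambda|U|$ can be negative for small $|U|$, one cannot simply discard "bad" edges — the linear corrections have to be carried through both applications of EML and shown, via cheap $d$-regularity or cardinality estimates, to sum to an $O(\lambda/d)$-multiple of the main term. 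This is precisely the mechanism by which the factorwise spectral gaps propagate cleanly through the Cartesian product.
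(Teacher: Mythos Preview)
Your argument is correct and considerably more elementary than the paper's. The paper proves Theorem~\ref{thm:CS} by importing the weak hypergraph regularity machinery of Lyall--Magyar: it decomposes $S=\E(S\mid\B\vee\C)+h$ via a regularity lemma (Lemma~\ref{co2.2}), controls the contribution of $h$ through a box-norm/von-Neumann estimate (Lemma~\ref{co2.31}), and then handles the structured part atom by atom with the expander mixing lemma. Because the structured part is a linear combination of up to $2^{c\epsilon^{-8}}$ atom indicators, the resulting dependence of the spectral-gap threshold on $\delta-\delta'$ is of tower/exponential type. Your route bypasses regularity entirely: two direct applications of EML (once in each factor) plus two nested Cauchy--Schwarz inequalities already give
\[
N\ \ge\ \frac{d_1d_2}{n_1^3n_2^3}|S|^4\ -\ 2(\lambda_1 d_2+\lambda_2 d_1)|S|\ \ge\ d_1d_2n_1n_2\Bigl(\delta^4-4\max_i\tfrac{\lambda_i}{d_i}\Bigr),
\]
so one may take $\epsilon=(\delta^4-\delta'^4)/4$, a polynomial rather than tower-type threshold. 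The trade-off is that the regularity framework in the paper is designed to scale to $k$-fold products and more general box-type patterns (as in Lyall--Magyar), whereas your direct argument is tailored to the $2$-dimensional case; that said, the same ``iterate EML across factors, close with nested Cauchy--Schwarz'' scheme extends to $k$ factors with only bookkeeping changes. One small cosmetic point: your error is bounded by $O(\max_i\lambda_i/d_i)\cdot d_1d_2n_1n_2$ (using $|S|\le n_1n_2$), not by $\delta\cdot\max_i\lambda_i/d_i$ times that quantity, but this does not affect the conclusion.
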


Theorem \ref{thm:CS} recovers the theorem on rectangles in $\mathbb{F}_q^d$ by Lyall and Magyar (Proposition 2.1 in \cite{LM1}) as follows. Let $G_1$ and $G_2$ be the graphs each with vertex set $\mathbb{F}_q^d$ where $a\sim b$ in $G_1$ if $||a-b|| = \alpha$ and $x\sim y$ in $G_2$ if $||a-b|| = \beta$. Then $G_1$ and $G_2$ are graphs with $q^d$ vertices, degree asymptotically $q^{d-1}$ and $\lambda \leq 2 q^{(d-1)/2}$ (see \cite{BST, kwok}, summarized as Theorem 10.1 in \cite{Vinh}). Note that if $||u_1- u_2|| = \alpha$ and $||v_1-v_2|| = \beta$, then letting $w = (u_1, v_1)$, $x = (u_1, v_2)$, $y=(u_2, v_1)$ and $z = (u_2, v_2)$, we have that $w,x,y,$ and $z$ form a rectangle in $\mathbb{F}_q^{2d}$ with side lengths $\alpha$ and $\beta$. Applying Theorem \ref{thm:CS} to these specific graphs shows that for $q$ large enough, any subset of $\mathbb{F}_q^{2d}$ of size at least $\delta q^{2d}$ contains $\Omega(q^{4d-2})$ rectangles, giving a quantitative strengthening of Lyall and Magyar's result. Another application of Theorem \ref{thm:CS} is on the number of rectangles in $\mathbb{F}_q^2$ with side lengths in a given multiplicative subgroup of $\mathbb{F}_q$, precisely, given a multiplicative subgroup $A$ of $\mathbb{F}_q$, we define $G_1=G_2$ being the graph with the vertex set $\mathbb{F}_q$ and there is an edge between $x$ and $y$ if $x-y\in A$. This is clear that this is a Cayley graph with $q$ vertices, of degree $|A|$, and it is also well-known that $\lambda\le q^{1/2}$ (see \cite[(1)]{koh2021configurations} for computations). Applying Theorem \ref{thm:CS}, we recover Theorem 1.1 from \cite{koh2021configurations}. 

\subsection{Distribution of cycles}
Our motivation of this section comes from the following question. 
\begin{question}
Let $E$ be a set in $\mathbb{F}_q^d$ and $m\ge 4$ be an integer. How large does $E$ need to be to guarantee that the number of cycles of the form $(x_1, \ldots, x_m)$ with $||x_i-x_{i+1}||=1$ for all $1\le i\le m-1$, and $||x_m-x_1||=1$, is close to the expected number $|E|^mq^{-m}$?
\end{question}
Iosevich, Jardine, and McDonald \cite{IJM} proved that the number of cycles of length $m$, denoted by $C_m(E)$, satisfies 
\begin{equation}\label{cycle}C_m(E)=(1+o(1))\frac{|E|^m}{q^m},\end{equation}
whenever
\begin{align*}
|E|\geq \left\{\begin{array}{ll}
q^{\frac{1}{2}\left(d+2-\frac{k-2}{k-1}+\delta\right)} & : m=2k, \ \text{even} \\
q^{\frac{1}{2}\left(d+2-\frac{2k-3}{2k-1}+\delta\right)} & :m=2k+1 \ \text{odd}
\end{array}
\right.
\end{align*}
where 
\begin{align*}
0<\delta<\frac{1}{2\left\lfloor\frac{m}{2}\right\rfloor^2}.
\end{align*}
In the continuous setting, this is a difficult problem, and there are only a few partial results. For instance, as a consequence of a theorem due to Eswarathasan, Iosevich, and Taylor \cite{EIT}, we know that if the Hausdorff dimension of $E$, denoted by $s$, is at least $\frac{d+1}{2}$, then we know that the upper Minkowski dimension of the set of cycles in $E$ is at most $2s-m$. If we consider the case of paths, then Bennett, Iosevich, and Taylor \cite{BIT} showed that there exists an open interval $I$ such that for any sequence $\{t_i\}_{i=1}^m$ of elements in $I$, we always can find paths of length $m+1$ with gaps $\{t_i\}_{i=1}^m$ between subsequent elements in $E$ as long as the Hausdorff dimension of $E$ is greater than $\frac{d+1}{2}$. We refer the reader to \cite{liu2020hausdorff} for the recent study on this problem. It is worth noting that in the discrete setting, results on distribution of paths also play crucial role in proving (\ref{cycle}).

In the graph setting, we have the following extension. We note here that we are counting any sequence of $m$ vertices $(v_1,\cdots, v_m)$ with $v_i\sim v_{i+1}$ and $v_1\sim v_m$ as a cycle of length $m$. That is, we are counting labeled cycles and we include degenerate cycles in the count. One could combine Theorem \ref{cycle-main1} for various values of $m$ and lemmas used to prove it to obtain results about non-degenerate cycles as well, but we do not do this explicitly here.

\begin{theorem}\label{cycle-main1}
Let $\mathcal{G}$ be an $(n, d, \lambda)$-graph and $U$ be a vertex set with ${\lambda \cdot \frac{n}{d}=o(|U|)}$. Let $C_m(U)$ denote the number of (labeled, possibly degenerate) cycles of length $m$ with vertices in $U$. Then we have 
\[
\left|C_m(U) -\frac{|U|^md^m}{n^m}\right| = O \left( \frac{\lambda |U|^{m-1}d^{m-1}}{n^{m-1}} + \frac{\lambda^{m-2} |U|^2 d}{n}\right),
\]
\end{theorem}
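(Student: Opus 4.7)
The plan is to spectrally decompose the adjacency matrix $A$ of $\mathcal{G}$ as $A = (d/n) J + E$, where $J = \mathbf{1}\mathbf{1}^T$ and $E$ is supported on the orthogonal complement of $\mathbf{1}$ with operator norm $\|E\| \le \lambda$. Writing
\[
C_m(U) = \sum_{v_1, \ldots, v_m \in U} \prod_{i=1}^{m} A_{v_i v_{i+1}} = \sum_{v \in U^m} \prod_{i=1}^{m} \left( \frac{d}{n} + E_{v_i v_{i+1}}\right)
\]
(indices mod $m$) and expanding the product, one obtains $2^m$ terms indexed by subsets $S \subseteq [m]$, where $S$ marks the factors that contribute $E_{v_i v_{i+1}}$. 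The term $S = \emptyset$ supplies the main term $(d/n)^m |U|^m$, and the task is to bound the remaining contributions.

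For each $\emptyset \ne S \ne [m]$, I would cut the cyclic index set at some $j \notin S$ to linearize and decompose $S$ into its $r(S)$ maximal consecutive runs of lengths $\ell_1, \ldots, \ell_{r(S)}$. A direct summation over the vertex coordinates shows that the contribution factorizes as $|U|^{m-|S|-r(S)} \prod_t \mathbf{1}_U^T E^{\ell_t} \mathbf{1}_U$, where the free $|U|$ factors come from coordinates $j$ such that neither $j$ nor $j-1$ lies in $S$. Applying the elementary spectral bound $|\mathbf{1}_U^T E^{\ell} \mathbf{1}_U| \le \|E\|^\ell \|\mathbf{1}_U\|_2^2 \le \lambda^{\ell} |U|$ shows that each such $S$ contributes at most $(d|U|/n)^{m-|S|} \lambda^{|S|}$. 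Under the hypothesis $\lambda n/d = o(|U|)$, the ratio of successive-size terms is $\lambda n/(d|U|) = o(1)$, so the sum is dominated by $|S| = 1$ and yields the first claimed error $O(\lambda |U|^{m-1} d^{m-1}/n^{m-1})$.

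The case $S = [m]$ is structurally different, because no coordinate is free; instead the contribution equals $\mathrm{tr}((D_U E)^m) = \mathrm{tr}(M^m)$, where $D_U$ is the diagonal indicator of $U$ and $M := D_U E D_U$ is symmetric with $\|M\| \le \lambda$. The eigenvalue estimate
\[
|\mathrm{tr}(M^m)| \le \|M\|^{m-2}\, \mathrm{tr}(M^2) = \lambda^{m-2}\, \|M\|_F^2 = \lambda^{m-2} \sum_{i,j \in U} E_{ij}^2
\]
reduces the problem to estimating $\sum_{i,j \in U} E_{ij}^2$. Expanding $E_{ij} = A_{ij} - d/n$ and applying the expander mixing lemma to $e(U)$ gives $\sum_{i,j \in U} E_{ij}^2 = O(d|U|^2/n + \lambda|U|)$, which is $O(d|U|^2/n)$ under the hypothesis, producing the second error term $O(\lambda^{m-2} d|U|^2/n)$.

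The main obstacle is precisely the $S = [m]$ case. Unlike the other subsets, it has no free coordinate to split off, so a naive operator-norm bound on $\mathrm{tr}(M^m)$ would only deliver $O(\lambda^m)$, which is too weak. The key move is to route through the Frobenius norm via $|\mathrm{tr}(M^m)| \le \|M\|^{m-2} \|M\|_F^2$ and then invoke the expander mixing lemma to trade $\|M\|_F^2$ for the sharper $d|U|^2/n$ scale; this is what forces the $\lambda^{m-2}$ (rather than $\lambda^m$) dependence in the second error term.
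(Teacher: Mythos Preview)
Your argument is correct and in fact cleaner than either of the paper's two proofs, though there is one small notational slip worth flagging. For a run of length $\ell_t \ge 2$, the contribution is not literally $\mathbf{1}_U^T E^{\ell_t}\mathbf{1}_U$, since the intermediate vertices of the run are also constrained to lie in $U$; the correct expression is $\mathbf{1}_U^T M^{\ell_t}\mathbf{1}_U$ with $M = D_U E D_U$. This does not affect anything, because $\|M\|\le\|E\|\le\lambda$ and the same bound $\lambda^{\ell_t}|U|$ applies. With that correction your proof goes through verbatim.

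The paper's route is quite different. It gives two proofs, both inductive on $m$ and both resting on a path--counting estimate (Proposition~\ref{paths}). The first proof develops a sharpened expander mixing lemma for the tensor product $\mathcal{G}\otimes\mathcal{G}$ (Proposition~\ref{keylemma}) and realizes $C_m(U)$ as a bilinear form in path--counting functions $f(x,y)=p_{k}(x,y)$, $g(z,w)=p_{k-1}(z,w)$ on $U\times U$; the induction then feeds estimates for $C_{2k-2}(U)$ and $P_k(U)$ back into the lemma. The second proof uses the ordinary expander mixing lemma to obtain the recursion $|C_{2k}(U)-\tfrac{d}{n}P_{2k-1}(U)|\le\lambda\sqrt{C_{2k}(U)C_{2k-2}(U)}$ and solves the resulting quadratic in $\sqrt{C_{2k}(U)}$; this version loses a constant factor at $m=4$. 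By contrast, your direct spectral expansion $A=(d/n)J+E$ followed by the binomial decomposition over $S\subseteq[m]$ is non-inductive, handles all $m$ uniformly (including $m=4$ sharply), and isolates the second error term $\lambda^{m-2}d|U|^2/n$ in a single step via $|\mathrm{tr}(M^m)|\le\|M\|^{m-2}\|M\|_F^2$ together with the expander mixing bound on $\|M\|_F^2$. What the paper's machinery buys in exchange is reusability: the tensor--product mixing lemma (Proposition~\ref{keylemma}) is of independent interest for counting other structures, and the path--based framework adapts readily to the colored variants discussed elsewhere in the paper.
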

The error term cannot be improved for $m=4$. For instance, we define a graph with the vertex set $\mathbb{F}_q^2$ where two vertices $(a, b)$ and $(c, d)$ are adjacent if and only if $ac+bd=1$. Using the geometric facts in $\mathbb{F}_q^2$ that any two lines intersect in at most one point and there is only one line passing through two given points, we can see that this graph contains no $C_4$, even though it is a $(q^2, q, \sqrt{q})$ graph (if one includes loops). We also remark that as a corollary of a result due to Alon \cite[Theorem 4.10]{MSS} we know that the number of cycles in $U$ is close to the expected number as long as $|U|\gg \lambda (n/d)^2$. This result is of course weaker than Theorem \ref{cycle-main1}.

We now discuss how Theorem \ref{cycle-main1} implies and improves previous results. In \cite{IJM}, counting results for cycles are proved in both the distance graph and the dot-product graph over $\mathbb{F}_q^d$. Formally, let $G_t^{dist}$ and $G_t^{prod}$ be the graphs on vertex set $\mathbb{F}_q^d$ where $u\sim v$ in $G_t^{dist}$ if $||u-v|| = t$ and $u\sim v$ in $G_t^{prod}$ if $u\cdot v = t$. As each of these graphs are approximately $q^{d-1}$ regular and with second eigenvalue bounded above by $2q^{(d-1)/2}$, Theorem \ref{cycle-main1} can be applied. In \cite{IJM}, the same quantitative results are proved for both graphs but with different methods, and the authors write the following:

``We note that in this paper, we obtain the same results for the distance graph and the dot-
product graph. While the techniques are, at least superficially, somewhat different due to the
lack of translation invariance in the dot-product setting, it is reasonable to ask whether a general
formalism is possible."

Theorem \ref{cycle-main1} answers this question in a strong way, as it may be applied in a much more general setting than just distance or dot-product graphs. Furthermore, Theorem \ref{cycle-main1} implies the estimate \eqref{cycle} with an improved threshold on the size of the subset, namely we may remove the $\delta$ in the exponent that appears in the result from \cite{IJM}.  The proof of Theorem \ref{cycle-main1} requires estimates on the number of paths in our graph, for example Proposition \ref{paths}. This is again done in a general way for $(n, d, \lambda)$-graphs. We also note that colorful versions of Theorem \ref{cycle-main1} and the lemmas required to prove it can be proved with only minor modifications to the proof. That is, given an $(n, d, \lambda)$-colored graph and a fixed coloring of a path or cycle, one can obtain the same estimates on the number of such colorful subgraphs that appear. For ease of exposition we only prove an uncolored version of Theorem \ref{cycle-main1}, but Theorems \ref{tree-thm1} and \ref{tree-thm12} (see below) are stated and proved in a colorful way as proof of concept. It is possible through this general set up to recover Theorem 1.1 of \cite{bene} and Theorem 6 of \cite{steven}.

Finally, we prove Theorem \ref{cycle-main1} in two different ways. The second approach is quite specific to counting cycles, but more straightforward (it is also slightly weaker: we obtain the same quantitative results for $m\geq 5$ but for $m=4$ only prove the result up to a multiplicative constant factor). The first approach passes the problem to counting structures in the tensor product of two $(n, d, \lambda)$-graphs. We note that the tensor product of two $(n, d, \lambda)$-graphs is itself a $(n^2, d^2, d\lambda)$ graph, and so one may try to use pseudo-randomness of this graph to count subgraphs. However, this is not good enough for our purpose, and we must prove a version of the expander mixing lemma that applies specifically to tensor products of graphs. This result (Proposition \ref{keylemma}) is significantly stronger than directly applying the classical expander mixing lemma to the tensor product graph, and we believe it is of independent interest, as the second approach along with Proposition \ref{keylemma} could be used to count other structures in tensor products of pseudo-random graphs.

\subsection{Distribution of disjoint trees}
The last question we consider in this paper is the following.
\begin{question}
Let $E$ be a set in $\mathbb{F}_q^d$, and $T$ be a tree of $m$ vertices. How large does $E$ need to be to guarantee that the number of vertex disjoint copies of $T$ in $E$ is close to $|E|/m$? 
\end{question}

That is, we are asking for a threshold such that any set of large enough size has an almost spanning $T$-factor. We now to introduce the notion of the stringiness of a graph, $T$, denoted $\sigma(T),$ which is defined as $(d_1+1)\prod_{i=2}^n d_i$ where $d_1\geq d_2 \cdots \geq d_n$ is the degree sequence of $T$ in nonincreasing order. Using this, Soukup \cite{david} proved that for any tree $T$ of $m$ vertices with stringiness $\sigma(T)$, and for any $E\subset \mathbb{F}_q^d$, if $|E|\gg \sigma(T)q^{\frac{d+1}{2}}$, then the number of disjoint copies of $T$ in $E$ is at least 
\[\frac{|U|}{\sigma(T)}-q^{\frac{d+1}{2}}.\]

In this section, we provide improvements of this result. 

\begin{theorem}\label{tree-thm1}
Let $G$ be an $(n, d, \lambda)$-colored graph with the color set $D$. Let $T$ be a tree with edges colored by $D$. For any $U\subset V(G)$ with $|U|=r\cdot \frac{\lambda n}{d}$, the number of disjoint copies of $H$ in $U$ is at least 
\[\frac{|U|}{\sigma(T)}-\frac{\lambda n}{d},\]
where $\sigma(T)$ is the stringiness of $T$.
\end{theorem}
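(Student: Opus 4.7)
The plan is a greedy iterative embedding. Root the tree $T$ at a vertex $v_1$ of maximum degree $d_1$ and BFS-order its vertices $v_1, v_2, \ldots, v_t$ so that each $v_i$ for $i \geq 2$ has a unique parent $v_{p(i)}$ already placed, joined by an edge of prescribed color $c_i \in D$. Starting with a pool $W = U$, in each outer iteration we attempt to embed one copy of $T$ into $W$, delete its $t$ vertices from $W$, and continue.

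To produce one copy inside the current $W$, extend the partial embedding one vertex at a time: once $u_1, \ldots, u_{i-1}$ are fixed, pick $u_i$ to be a color-$c_i$ neighbor of $u_{p(i)}$ in $W \setminus \{u_1, \ldots, u_{i-1}\}$. Since the color-$c_i$ subgraph is itself an $(n, d, \lambda)$-graph, the expander mixing lemma bounds the number of such extensions from below by approximately $d|W|/n$ with an additive error controlled by $\lambda$. The factor $\sigma(T) = (d_1+1)\prod_{i=2}^{t} d_i$ arises from a reservation argument: at the root one sets aside $d_1+1$ slots (the root itself together with its $d_1$ children), and at each subsequent $v_i$ one sets aside $d_i$ slots for its down-neighbors. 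A careful accounting shows the inner subroutine succeeds as long as $|W|$ exceeds roughly $\sigma(T)\cdot\lambda n/d$. Iterating the outer loop yields at least $(|U|-\sigma(T)\lambda n/d)/t$ disjoint copies, which a short computation shows is at least $|U|/\sigma(T)-\lambda n/d$ using $\sigma(T) \geq t$.

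The main obstacle is that a single-vertex application of the expander mixing lemma gives only an $O(\lambda\sqrt{|W|})$ error for the extension count at a specific $u_{p(i)}$, which is comparable to the main term $d|W|/n$ once $|W|$ is as small as $\lambda n/d$. The resolution is to work in aggregate: apply the expander mixing lemma on the pair $(W, W)$ in the color-$c_i$ subgraph to control the total number of partial embeddings with relative error of order $\lambda/(d|W|/n)$, then pass to a specific embedding by averaging. Equivalently, at each inner step we flag the \emph{bad} vertices of $W$ whose color-$c_i$ degree into $W$ deviates substantially from the EML-predicted value $d|W|/n$; the number of such bad vertices is at most a constant multiple of $\lambda n/d$, and these are absorbed into the final $\lambda n/d$ error term in the stated bound.
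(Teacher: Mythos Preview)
Your outline diverges from the paper's argument, and the step you label ``a careful accounting shows the inner subroutine succeeds as long as $|W|$ exceeds roughly $\sigma(T)\cdot\lambda n/d$'' is the entire difficulty and is not established. The reservation heuristic you give (set aside $d_1+1$ slots at the root and $d_i$ slots at each subsequent $v_i$) does not explain why these quantities \emph{multiply} to give the threshold. The bad-vertex bound you invoke---for one fixed colour and threshold $s$, at most $s\lambda n/d$ vertices of $W$ have fewer than $s$ suitably coloured neighbours in $W$---is intrinsically additive when combined across the vertices of $T$, and chaining such bounds along a BFS ordering naturally produces a threshold polynomial in $t$ (this is essentially how Theorem~\ref{tree-thm12} is proved), not the product $(d_1+1)\prod_{i\ge 2}d_i$. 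Nor does aggregating via the expander mixing lemma on $(W,W)$ and then averaging help: that controls the \emph{total} number of partial embeddings, not the existence of a full embedding rooted at the specific vertex $u_{p(i)}$ already committed to earlier in the greedy process.

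The paper instead performs a structural induction on $T$. When $T$ is not a star, delete all leaves of $T$ to form $T'$, pick a leaf $v$ of $T'$ whose $T$-degree is not uniquely maximal among such leaves, and let $T^*$ be $T$ with the $y$ leaves hanging off $v$ removed; one checks $\sigma(T)=(y+1)\sigma(T^*)$. By induction $U$ contains at least $|U|/\sigma(T^*)-\lambda n/d$ vertex-disjoint copies of $T^*$; let $W$ be the set of images of $v$ across these copies. A separate disjoint-star lemma (Lemma~\ref{main5}, itself a short consequence of the bad-vertex bound) then gives at least $\bigl(|W|/(\lambda n/d)-y\bigr)/(y+1)\cdot\lambda n/d$ vertex-disjoint colour-correct copies of $K_{1,y}$ \emph{inside $W$}. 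Each such star is centred at some $v$-image, and its $y$ leaves are $v$-images of other $T^*$-copies; attaching the star to the $T^*$-copy at its centre produces a copy of $T$, and disjointness is inherited. Substituting the inductive lower bound on $|W|$ yields exactly $|U|/\sigma(T)-\lambda n/d$. The multiplicative factor therefore arises from the recursion $\sigma(T)=(y+1)\sigma(T^*)$, not from any per-step reservation in a sequential greedy embedding.
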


Theorem \ref{tree-thm1} directly generalizes Soukup's result in \cite{david} to pseudo-random graphs. However, the stringiness of a tree may be exponential in the number of vertices. Using a different method, we prove a theorem which for most trees does much better.

\begin{theorem}\label{tree-thm12}
Let $G$ be an $(n, d, \lambda)$-colored graph with the color set $D$. Let $T$ be a tree of $m$ vertices with edges colored by $D$. For any $U\subset V(G)$ with $|U|\ge m(m-1)\cdot \frac{\lambda n}{d}$, the number of disjoint copies of $T$ in $U$ is at least 
\[\frac{|U|}{m}-\frac{\lambda n}{d}.\]
\end{theorem}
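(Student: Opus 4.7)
The plan is a greedy \emph{find-and-remove} argument supported by a tree-embedding lemma: I will repeatedly locate a colored copy of $T$ inside the current vertex set and delete its $m$ vertices, continuing as long as the remaining set is large enough for the lemma to apply.

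I will first establish the tree-embedding lemma: any $V' \subseteq V(G)$ with $|V'| \ge m\lambda n/d$ contains a colored copy of $T$ (an injection $\phi : V(T) \to V'$ sending each color-$c$ edge of $T$ to a color-$c$ edge of $G$). Fix a root of $T$ and a BFS ordering $v_1,\ldots,v_m$, so that every $v_i$ with $i\ge 2$ has a unique parent $v_{p(i)}$ with $p(i)<i$ and associated edge color $c_i\in D$. Because each color class of $G$ is itself an $(n,d,\lambda)$-graph, the expander mixing lemma applied to that class implies that the set of vertices $a\in V(G)$ with $|N_c(a)\cap V'|<d|V'|/(2n)$ has size at most $4\lambda^2 n^2/(d^2|V'|)$. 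Pruning $V'$ by removing the atypical vertices across the $\le m-1$ colors used by $T$ costs only a small fraction of $V'$; I will then build $\phi$ one vertex at a time, at step $i$ picking $\phi(v_i)$ among the color-$c_i$ neighbors of $\phi(v_{p(i)})$ that are robust and distinct from the $\le i-1$ previously placed vertices. The threshold $|V'|\ge m\lambda n/d$ is calibrated so that the valid choice set is non-empty at every step.

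For the greedy iteration, set $U_0:=U$. Given $U_k$, if $|U_k|\ge m\lambda n/d$ I apply the lemma to obtain a colored copy $T_k\subseteq U_k$ of $T$ and set $U_{k+1}:=U_k\setminus V(T_k)$; otherwise I stop, letting $K$ denote the number of iterations performed. The copies $T_0,\ldots,T_{K-1}$ are pairwise vertex-disjoint by construction, and termination gives $|U|-Km=|U_K|<m\lambda n/d$, whence
\[
K \;>\; \frac{|U|}{m}-\frac{\lambda n}{d}.
\]
The hypothesis $|U|\ge m(m-1)\lambda n/d$ ensures that the lemma applies already on the first iteration and produces a nontrivial lower bound, at least $(m-2)\lambda n/d$ copies.

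The hard part will be the tree-embedding lemma with the sharp threshold $m\lambda n/d$. At this boundary, a typical vertex has on the order of $m\lambda$ color-$c_i$ neighbors in $V'$, only a narrow margin above the $\le m-1$ previously placed vertices that must be avoided plus the small set of atypical vertices the EML discards. Controlling these error terms uniformly along all $m-1$ BFS steps, and ensuring that the resulting embedding is genuinely injective rather than merely a colored homomorphism, is the key technical challenge; this is where the quadratic $m(m-1)$ factor in the hypothesis on $|U|$ provides the necessary slack.
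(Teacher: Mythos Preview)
Your greedy find-and-remove scheme is clean, but the whole argument rests on the tree-embedding lemma at the threshold $|V'|\ge m\lambda n/d$, and your sketch for that lemma does not go through. At $|V'|=m\lambda n/d$, a robust vertex has about $d|V'|/(2n)\approx m\lambda/2$ color-$c_i$ neighbors in $V'$, whereas the set of atypical vertices (those with fewer than $d|V'|/(2n)$ color-$c$ neighbors) has size on the order of $\lambda^2 n^2/(d^2|V'|)\approx \lambda n/(md)$ per color. In the regimes of interest one has $n/d\gg m^2$, so the atypical set is vastly larger than the pool of available neighbors; after a single pruning you cannot guarantee that any of the $\sim m\lambda$ neighbors of $\phi(v_{p(i)})$ lie in the robust set $V''$, and the BFS embedding stalls at the second step. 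Your final remark that the hypothesis $|U|\ge m(m-1)\lambda n/d$ supplies the needed slack is misplaced: that slack is only present at the first few iterations of the greedy process; as $|U_k|$ shrinks towards $m\lambda n/d$ the slack disappears, and yet the lemma is still required there to keep removing copies. Any weakening of the embedding threshold to $C\lambda n/d$ with $C>m$ would only yield $K>|U|/m - (C/m)\lambda n/d$, strictly weaker than the stated conclusion.

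The paper avoids this difficulty entirely by inducting on $|V(T)|$ rather than on the number of copies. One deletes a leaf (with incident edge of color $c$) to obtain $T^*$ on $m-1$ vertices; the inductive hypothesis supplies at least $|U|/(m-1)-\lambda n/d\ge |U|/m$ disjoint copies of $T^*$ in $U$, of which exactly $|U|/m$ are retained. Letting $A$ be the images of the deleted leaf's neighbor and $B$ the $|U|/m$ vertices of $U$ not used by the retained copies, a maximal color-$c$ matching between $A$ and $B$ leaves at most $\lambda n/d$ vertices unmatched on each side (otherwise the expander mixing lemma forces an additional edge), which extends at least $|U|/m-\lambda n/d$ of the $T^*$-copies to disjoint copies of $T$. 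No single-copy embedding lemma at a sharp threshold is ever invoked.
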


\section{Proof of Theorem \ref{thm:CS}}
Set $V_i=V(G_i)$ and $E_i=E(G_i)$ for $1\le i\le 2$. If $i$ satisfies $\frac{\lambda_i}{d_i} = \max \left\{ \frac{\lambda_1}{d_1}, \frac{\lambda_2}{d_2}\right\}$ then throughout the proof we will use $\frac{\lambda}{d}$ to denote $\frac{\lambda_i}{d_i}$.

\subsection{Square-norm}
For functions $f_1, f_2, f_3, f_4\colon V_1\times V_2\to [-1,1]$, we define
\begin{align*}N(f_1, f_2, f_3, f_4):=&\E_{\substack{a, b, c, d\\(a, b)\in E_1, (c, d)\in E_2}}f_1(a, c)f_2(a, d)f_3(b, c)f_4(b, d)\\:= \frac{1}{|V_1|d_1|V_2|d_2}&\sum_{\substack{a, b, c, d\\(a, b)\in E_1, (c, d)\in E_2}}f_1(a, c)f_2(a, d)f_3(b, c)f_4(b, d), \end{align*}
and 
\begin{align*}
M(f_1, f_2, f_3, f_4):=&\E_{a, b, c, d}f_1(a, c)f_2(a, d)f_3(b, c)f_4(b, d)\\:= \frac{1}{|V_1|^2|V_2|^2} &\sum_{a, b, c, d}f_1(a, c)f_2(a, d)f_3(b, c)f_4(b, d).
\end{align*}

Let $S$ be any subset of $V_1\times V_2$. Recall that when context is clear, we use $S(\cdot)$ to denote the characteristic function $\chi_S$ on the set $S$. We now prove two simple but useful facts about $M$ using Cauchy-Schwarz.

\begin{proposition}\label{MCS}
$$M(S,S,S,S) \geq \left(\frac{|S|}{|V_1||V_2|}\right)^4.$$
\end{proposition}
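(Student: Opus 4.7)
The plan is to rewrite $M(S,S,S,S)$ as a nested average over one pair of variables of a square, and then apply Cauchy--Schwarz twice. Writing $\alpha := |S|/(|V_1||V_2|)$ for the density of $S$, the goal is simply $M(S,S,S,S) \ge \alpha^4$.

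First I would split the four-variable expectation as $\E_{a,b}\E_{c,d}$. Since $c$ and $d$ are averaged independently and the integrand factors as a product of a function of $(a,b,c)$ and the same function of $(a,b,d)$, the inner expectation is a perfect square:
\[
\E_{c,d}\, S(a,c)S(a,d)S(b,c)S(b,d) \;=\; \bigl(\E_c\, S(a,c)S(b,c)\bigr)^2.
\]
Thus $M(S,S,S,S) = \E_{a,b} X(a,b)^2$, where $X(a,b) := \E_c S(a,c)S(b,c)$.

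Next, applying Cauchy--Schwarz (equivalently, using $\E Y^2 \ge (\E Y)^2$) gives
\[
M(S,S,S,S) \;\ge\; \bigl(\E_{a,b} X(a,b)\bigr)^2 \;=\; \bigl(\E_{a,b,c}\, S(a,c)S(b,c)\bigr)^2.
\]
Now I would swap the order of expectation so that $c$ is the outer variable; since $a$ and $b$ are averaged independently, the bracket becomes $\E_c (\E_a S(a,c))^2$. A second application of Cauchy--Schwarz against the constant function in $c$ yields
\[
\E_c \bigl(\E_a S(a,c)\bigr)^2 \;\ge\; \bigl(\E_{a,c} S(a,c)\bigr)^2 \;=\; \alpha^2,
\]
and squaring this lower bound finishes the proof: $M(S,S,S,S) \ge (\alpha^2)^2 = \alpha^4$.

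There is no real obstacle here: the only thing to be careful about is keeping track of which variables are averaged together, so that the integrand factors cleanly at each application of Cauchy--Schwarz. This is a standard Gowers-box-type argument, essentially saying that the $U^2$-type average of an indicator function is bounded below by the fourth power of its density.
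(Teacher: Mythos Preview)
Your proof is correct and is essentially the same argument as the paper's: two applications of Cauchy--Schwarz, one in each of the two coordinate directions. The only cosmetic difference is that the paper starts from $|S|$ and applies Cauchy--Schwarz upward to reach $M(S,S,S,S)^{1/4}$, whereas you start from $M(S,S,S,S)$ and apply $\E Y^2 \ge (\E Y)^2$ downward to reach $\alpha^4$; these are the same steps read in opposite orders.
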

\begin{proof}
We write the definition of $|S|$ as a sum and apply Cauchy-Schwarz twice to get
\begin{align*}
|S| &= \sum_{a\in V_1}\sum_{b\in V_2}S(a,b) \leq \left(\sum_{a\in V_1}1^2\right)^\frac{1}{2}\left(\sum_{a\in V_1}\left(\sum_{b\in V_2}S(a,b)\right)^2\right)^\frac{1}{2}\\
&= |V_1|^\frac{1}{2}\left(\sum_{b\in V_2}\sum_{c\in V_2}\sum_{a\in V_1}S(a,b)S(a,c)\right)^\frac{1}{2}\\
&\leq |V_1|^\frac{1}{2}\left(\left(\sum_{b\in V_2}\sum_{c\in V_2}1^2\right)^\frac{1}{2}\left(\sum_{b\in V_2}\sum_{c\in V_2}\left(\sum_{a\in V_1}S(a,b)S(a,c)\right)^2\right)^\frac{1}{2}\right)^\frac{1}{2}\\
&= |V_1|^\frac{1}{2}\left(|V_2|\left(\sum_{b\in V_2}\sum_{c\in V_2}\sum_{a\in V_1}\sum_{d\in V_1}S(a,b)S(a,c)S(d,b)S(d,c)\right)^\frac{1}{2}\right)^\frac{1}{2},
\end{align*}
which, upon rearranging and renaming variables becomes
$$|V_1|^\frac{1}{2}|V_2|^\frac{1}{2}\left(|V_1|^2|V_2|^2M(S,S,S,S) \right)^\frac{1}{4}.$$
Comparing this to $|S|$ yields the desired result.
\end{proof}

For any function $f\colon V_1\times V_2\to [-1, 1],$  we define 
\[||f||_{\square(V_1\times V_2)}:=M(f, f, f, f)^{1/4}.\]
\begin{lemma}\label{basic1}
For functions $f_1, f_2, f_3, f_4\colon V_1\times V_2\to [-1, 1]$, we have 
\[ M(f_1, f_2, f_3, f_4)\le \min_{i}||f_i||_{\square(V_1\times V_2)}.\]
\end{lemma}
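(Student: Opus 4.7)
The plan is to prove the stronger Gowers-type box-norm inequality
\[
|M(f_1,f_2,f_3,f_4)|\ \le\ \prod_{i=1}^{4}\|f_i\|_{\square(V_1\times V_2)},
\]
and then deduce the stated bound by noting that $|f_i|\le 1$ forces $\|f_i\|_\square\le 1$ (indeed $\|f_i\|_\square^4=\mathbb{E}_{c,d}(\mathbb{E}_a f_i(a,c)f_i(a,d))^2\le 1$), so the product is dominated by any single factor and in particular by $\min_i \|f_i\|_\square$. This also makes the box-norm non-negative, which matters for reading $M\le \min_i\|f_i\|_\square$ in the degenerate case where $M$ itself is negative.

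I will carry out the product-inequality by two rounds of Cauchy--Schwarz on pairs of variables, following the standard Gowers pattern. First, I would write
\[
M(f_1,f_2,f_3,f_4)=\mathbb{E}_{c,d}\Bigl(\mathbb{E}_a f_1(a,c)f_2(a,d)\Bigr)\Bigl(\mathbb{E}_b f_3(b,c)f_4(b,d)\Bigr),
\]
and apply Cauchy--Schwarz in the variables $(c,d)$. Expanding the two resulting squares and relabeling dummy variables, each becomes an $M$-form, yielding
\[
M(f_1,f_2,f_3,f_4)^2\ \le\ M(f_1,f_2,f_1,f_2)\cdot M(f_3,f_4,f_3,f_4).
\]
Then I would repeat the same maneuver on each factor, but this time grouping the integrations as $\mathbb{E}_{a,b}(\mathbb{E}_c f_i(a,c)f_i(b,c))(\mathbb{E}_d f_j(a,d)f_j(b,d))$ so that Cauchy--Schwarz in $(a,b)$ gives
\[
M(f_1,f_2,f_1,f_2)^2\le \|f_1\|_\square^4\,\|f_2\|_\square^4,\qquad M(f_3,f_4,f_3,f_4)^2\le \|f_3\|_\square^4\,\|f_4\|_\square^4.
\]
Combining the two rounds gives $M^4\le \prod\|f_i\|_\square^4$, and taking fourth roots finishes the product inequality.

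The only real hazard in this argument is bookkeeping: after each Cauchy--Schwarz and square-expansion, one must correctly relabel the duplicated dummy variable (say $a,a'$ or $c,c'$) as a fresh variable of the $M$-template $(a,b,c,d)$ and verify that the four functions land in the right slots so that the expression really is one of $M(f_1,f_2,f_1,f_2)$, $M(f_3,f_4,f_3,f_4)$, or $\|f_i\|_\square^4=M(f_i,f_i,f_i,f_i)$. There is no conceptual difficulty; the inequality is a clean instance of the Cauchy--Schwarz--Gowers cube, and it is specifically because of how $M$ was defined symmetrically in pairs of rows and columns that the recursion closes up after exactly two rounds.
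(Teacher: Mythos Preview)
Your proof is correct and follows essentially the same Gowers--Cauchy--Schwarz scheme as the paper: two rounds of Cauchy--Schwarz, one separating the $V_1$-variables and one separating the $V_2$-variables, to reduce to the four pure terms $M(f_i,f_i,f_i,f_i)$, then bounding three of them by $1$. The only cosmetic difference is the order of the two rounds (you do $(c,d)$ then $(a,b)$, while the paper writes the $(a,b)$ step explicitly and says the $(c,d)$ step is analogous), and you state the intermediate product inequality $|M|\le\prod_i\|f_i\|_\square$ explicitly whereas the paper leaves this implicit.
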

\begin{proof}
We apply Cauchy-Schwarz to the definition of $M$ to get
\begin{align*}
M(f_1,f_2,f_3,f_4) &= \frac{1}{|V_1|^2|V_2|^2} \sum_{a, b\in V_1, c, d\in V_2}f_1(a,c)f_2(a,d)f_3(b,c)f_4(b,d)\\
&= \frac{1}{|V_1|^2|V_2|^2} \sum_{a, b\in V_1}\left(\sum_{c\in V_2}f_1(a,c)f_3(b,c)\right) \left(\sum_{d\in V_2}f_2(a,d)f_4(b,d)\right)\\
&\leq \frac{1}{|V_1|^2|V_2|^2} \left(\sum_{a, b\in V_1}\left(\sum_{c\in V_2}f_1(a,c)f_3(b,c)\right)^2\right)^\frac{1}{2}\\
&\qquad\qquad \cdot\left(\sum_{a, b\in V_1}\left(\sum_{d\in V_2}f_2(a,d)f_4(b,d)\right)^2\right)^\frac{1}{2}\\
&=\left(M(f_1,f_1,f_3,f_3) \right)^\frac{1}{2} \cdot \left(M(f_2,f_2,f_4,f_4) \right)^\frac{1}{2}.
\end{align*}
A similar calculation using Cauchy-Schwarz and reversing the roles of $V_1$ and $V_2$ gives that 
\[
M(f_1, f_2, f_3, f_4) \leq (M(f_1, f_2, f_1, f_2))^{1/2}\cdot (M(f_3, f_4, f_3, f_4))^{1/2}.
\]
We finish by combining these inequalities and using the fact that $M(f_i,f_i,f_i,f_i) \leq 1$ for $i=1,2,3,4.$
\end{proof}

\subsection{A weak hypergraph regularity lemma}
Let $\B$ be a $\sigma$-algebra on $V_1$ and $\C$ be a $\sigma$-algebra on $V_2$. We recall here that a $\sigma$-algebra on $V_i$ is a collection of sets in $V_i$ that contains $V_i$, $\emptyset$, and is closed under finite intersections, unions, and complements.  
 
The complexity of a $\sigma$-algebra $\B$ is the smallest number of sets (atoms) needed to generate $\B$, and we denote by $\mathtt{complexity}(\B)$. Notice that $|\B|\le 2^{\mathtt{complexity}(\B)}$. We denote the smallest $\sigma$-algebra on $V_1\times V_2$ that contains both $\B\times V_2$ and $V_1\times \C$ by $\B\vee \C$. 

For a function $f\colon V_1\times V_2 \to \mathbb{R}$, we define the conditional expectation $\E(f|\B\vee \C)\colon V\to \mathbb{R}$ by the formula 
\[\E(f|\B\vee \C)(x):=\frac{1}{|(\B\vee \C)(x)|}\sum_{y\in (\B\vee \C)(x)}f(y),\]
where $(\B\vee \C)(x)$ denotes the smallest element of $\B\vee \C$ that contains $x$. We note that an atom of $\B\vee\C$ has the form $U\times V$ where $U$ and $V$ are atoms of $\B$ and $\C$, respectively.

The following lemma is a special case of Lemma 2.2 in \cite{LM1}. We refer the reader to \cite{LM1} for a detailed proof. 
\begin{lemma}\label{co2.2}
For any $\epsilon>0$, there exist $\sigma$-algebras $\B$ on $V_1$ and $\C$ on $V_2$ such that each algebra is spanned by at most $O(\epsilon^{-8})$ sets, and 
 \[||S-\E(S| \B\vee\C)||_{\square(V_1\times V_2)}\le \epsilon.\]
\end{lemma}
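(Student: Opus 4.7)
The plan is to prove the lemma by a Koopman--von Neumann/Frieze--Kannan style energy-increment argument, adapted to the box norm $\|\cdot\|_{\square(V_1\times V_2)}$. Define the energy of a pair of $\sigma$-algebras by
\[\mathcal{E}(\B,\C):=\E_{(a,c)}\bigl[\E(S\mid\B\vee\C)(a,c)\bigr]^2,\]
so $0\le\mathcal{E}\le 1$ since $S\in\{0,1\}$. I would start from the trivial algebras $\B_0=\{\emptyset,V_1\}$, $\C_0=\{\emptyset,V_2\}$ and iterate: at step $k$, if $\|S-\E(S\mid\B_k\vee\C_k)\|_\square>\epsilon$, extract a single set $A_k\subset V_1$ and a single set $B_k\subset V_2$, set $\B_{k+1}:=\B_k\vee\sigma(A_k)$ and $\C_{k+1}:=\C_k\vee\sigma(B_k)$, and otherwise stop. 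Each iteration adds one generator to each algebra.

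The heart of the argument is the extraction step. Write $g:=S-\E(S\mid\B_k\vee\C_k)\in[-1,1]$, so $\|g\|_\square^4>\epsilon^4$. Regroup the defining fourfold expectation as
\[\|g\|_{\square}^4=\E_{(b,d)}\,g(b,d)\cdot T(b,d),\qquad T(b,d):=\E_{(a,c)}\,g(a,c)\,g(a,d)\,g(b,c),\]
and apply Cauchy--Schwarz in $(b,d)$ with $\E_{(b,d)} g(b,d)^2\le 1$ to get $\E_{(b,d)}T(b,d)^2>\epsilon^8$. Pigeonhole on $(b,d)$ yields a choice $(b^*,d^*)$ with $|T(b^*,d^*)|>\epsilon^4$; setting $u(a):=g(a,d^*)$ and $v(c):=g(b^*,c)$, this becomes $|\E_{(a,c)}g(a,c)u(a)v(c)|>\epsilon^4$ with $|u|,|v|\le 1$. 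Decompose $u$ and $v$ by their signed level sets, $u=\int_0^1(\mathbf{1}_{\{u>t\}}-\mathbf{1}_{\{u<-t\}})\,dt$ and analogously for $v$, expand the product $u(a)v(c)$ into four terms, apply the triangle inequality, and pigeonhole on $t,s$ and on the four sign choices to obtain specific sets $A_k\subset V_1$ and $B_k\subset V_2$ satisfying
\[\bigl|\E_{(a,c)}\,g(a,c)\,\mathbf{1}_{A_k}(a)\,\mathbf{1}_{B_k}(c)\bigr|\ge\frac{\epsilon^4}{4}.\]

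For the energy increment, note that $\mathbf{1}_{A_k}\mathbf{1}_{B_k}$ is $(\B_{k+1}\vee\C_{k+1})$-measurable, and $\E(S\mid\B_k\vee\C_k)$ is also $(\B_{k+1}\vee\C_{k+1})$-measurable since the new algebra refines the old. By the defining property of conditional expectation,
\[\tfrac{\epsilon^4}{4}\le\bigl|\E g\cdot\mathbf{1}_{A_k}\mathbf{1}_{B_k}\bigr|=\bigl|\E\bigl(\E(S\mid\B_{k+1}\vee\C_{k+1})-\E(S\mid\B_k\vee\C_k)\bigr)\cdot\mathbf{1}_{A_k}\mathbf{1}_{B_k}\bigr|,\]
and Cauchy--Schwarz with $\|\mathbf{1}_{A_k}\mathbf{1}_{B_k}\|_2\le 1$ gives $\|\E(S\mid\B_{k+1}\vee\C_{k+1})-\E(S\mid\B_k\vee\C_k)\|_2\ge\epsilon^4/4$. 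Since $\E(\cdot\mid\B_k\vee\C_k)$ is the $L^2$-orthogonal projection onto the coarser-measurable subspace, the Pythagorean theorem yields
\[\mathcal{E}(\B_{k+1},\C_{k+1})-\mathcal{E}(\B_k,\C_k)\ge\frac{\epsilon^8}{16}.\]
The bound $\mathcal{E}\le 1$ forces termination within at most $16\epsilon^{-8}$ iterations, so both terminal algebras are generated by $O(\epsilon^{-8})$ sets and the output satisfies $\|S-\E(S\mid\B\vee\C)\|_\square\le\epsilon$.

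The main obstacle I anticipate is the extraction step itself: the fourfold expectation defining $\|g\|_\square^4$ mixes $a,b\in V_1$ and $c,d\in V_2$ symmetrically, so a direct Cauchy--Schwarz will not isolate a genuine rank-one correlation in the target variables $(a,c)$. The trick is to pigeonhole on the \emph{wing} variables $(b,d)$ first, which freezes two of the four factors into bounded functions of a single variable, and only then perform the level-set decomposition to pass from bounded functions $u,v$ to indicators of sets. This manoeuvre forces a quartic loss $\epsilon\mapsto\epsilon^4$ in the correlation, which then propagates via Pythagoras into the $\epsilon^{-8}$ complexity bound asserted in the statement.
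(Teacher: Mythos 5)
The paper does not give its own proof of this lemma --- it simply notes it is a special case of Lemma~2.2 in Lyall--Magyar \cite{LM1} and refers the reader there. Your proof is a self-contained energy-increment (Koopman--von Neumann) argument, which is indeed the standard route to such weak-regularity statements and is essentially what Lyall--Magyar do in the cited source, so in spirit you and the paper agree; the difference is that you have supplied the actual argument.

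I checked your extraction and increment steps and they are correct: the identity $\|g\|_{\square}^4=\E_{(b,d)}\,g(b,d)T(b,d)$ with $T(b,d)=\E_{(a,c)}g(a,c)g(a,d)g(b,c)$ is right, the signed level-set decomposition $u=\int_0^1(\mathbf{1}_{\{u>t\}}-\mathbf{1}_{\{u<-t\}})\,dt$ is valid for $u\in[-1,1]$, the orthogonality of $g=S-\E(S\mid\B_k\vee\C_k)$ against any $(\B_{k+1}\vee\C_{k+1})$-measurable function converts the $\epsilon^4/4$ correlation into an $\epsilon^8/16$ energy increment via Pythagoras, and the bound $\mathcal{E}\le 1$ yields termination in $O(\epsilon^{-8})$ rounds, each adding one generator to $\B$ and one to $\C$. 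One small simplification you might note: the Cauchy--Schwarz step passing through $\E_{(b,d)}T(b,d)^2>\epsilon^8$ is harmless but not needed --- since $|g|\le 1$, the inequality $\E_{(b,d)}|T(b,d)|\ge\E_{(b,d)}g(b,d)T(b,d)>\epsilon^4$ already lets you pigeonhole directly on $(b,d)$ to find $(b^*,d^*)$ with $|T(b^*,d^*)|>\epsilon^4$, reaching the same $\epsilon^4$ correlation with the rank-one function $u(a)v(c)$ in one step. The final complexity bound and conclusion are exactly the $O(\epsilon^{-8})$ and $\|S-\E(S\mid\B\vee\C)\|_{\square}\le\epsilon$ asserted.
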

We recall that
 \[\E(S|\B\vee\C)(x)=\frac{|S\cap (B\times C)|}{|B||C|},\]
where $B\times C$ is the atom of $\B\vee \C$ containing $x$.

\subsection{A generalized von-Neumann type estimate}

\begin{lemma}\label{co2.31}
For functions $f_1, f_2, f_3, f_4\colon V_1\times V_2\to [-1, 1]$, we have 
\[|N(f_1, f_2, f_3, f_4)|\le \min_{j}||f_j||_{\square (V_1\times V_2)}+O\left(\frac{\lambda^{1/4}}{d^{1/4}}\right).\]
\end{lemma}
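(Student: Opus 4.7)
The plan is to separate the argument into two ingredients: first, show that $N(f_1,f_2,f_3,f_4)$ is close to the unconstrained average $M(f_1,f_2,f_3,f_4)$ up to an error controlled by the pseudo-randomness of the two factor graphs, and second, invoke Lemma \ref{basic1}, which already provides the bound $|M(f_1,f_2,f_3,f_4)| \le \min_j \|f_j\|_{\square(V_1\times V_2)}$. Combining these two inputs via the triangle inequality then yields the claim.

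To execute the first step, I would introduce the intermediate quantity
\[
N_0(f_1,f_2,f_3,f_4) := \frac{1}{|V_1|^2 |V_2| d_2}\sum_{a,b\in V_1,\ (c,d)\in E_2} f_1(a,c)f_2(a,d)f_3(b,c)f_4(b,d),
\]
obtained from $N$ by dropping the constraint $(a,b)\in E_1$. The key observation for bounding $|N-N_0|$ is that, for each fixed pair $(c,d)$, the integrand factorizes as $\alpha_{cd}(a)\beta_{cd}(b)$ with $\alpha_{cd}(a)=f_1(a,c)f_2(a,d)$ and $\beta_{cd}(b)=f_3(b,c)f_4(b,d)$. This is exactly the rank-one setting of the classical expander mixing lemma applied to $G_1$; together with the bounds $\|\alpha_{cd}\|_2,\|\beta_{cd}\|_2\le |V_1|^{1/2}$ coming from $|f_j|\le 1$, EML gives
\[
\left|\frac{1}{|V_1|d_1}\sum_{(a,b)\in E_1}\alpha_{cd}(a)\beta_{cd}(b)-\frac{1}{|V_1|^2}\sum_{a,b\in V_1}\alpha_{cd}(a)\beta_{cd}(b)\right| \le \frac{\lambda_1}{d_1}
\]
uniformly in $(c,d)$. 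Averaging over $(c,d)\in E_2$ yields $|N-N_0|\le \lambda_1/d_1$.

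The passage from $N_0$ to $M$ is entirely symmetric. For each fixed $(a,b)$ the integrand now factorizes as $u_{ab}(c)v_{ab}(d)$ with $u_{ab}(c)=f_1(a,c)f_3(b,c)$ and $v_{ab}(d)=f_2(a,d)f_4(b,d)$, so the expander mixing lemma for $G_2$ produces $|N_0-M|\le \lambda_2/d_2$. Using the convention $\lambda/d=\max\{\lambda_1/d_1,\lambda_2/d_2\}$, the triangle inequality gives $|N-M|=O(\lambda/d)$, which is majorized by $O(\lambda^{1/4}/d^{1/4})$ in the pseudo-random regime $\lambda\le d$ of interest. Combining with Lemma \ref{basic1} produces the desired inequality.

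The only mildly delicate point is the initial grouping of the four factors: one has to split $f_1 f_2 f_3 f_4$ as $(f_1 f_2)(f_3 f_4)$ when removing the $G_1$-constraint and as $(f_1 f_3)(f_2 f_4)$ when removing the $G_2$-constraint, so that in each step the integrand becomes rank-one in the pair of variables over which the edge constraint is being relaxed. Once these groupings are correctly identified, the expander mixing lemma does all the work and the remainder of the argument is bookkeeping.
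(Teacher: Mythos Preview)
Your argument is correct and in fact yields the sharper error term $O(\lambda/d)$ rather than $O((\lambda/d)^{1/4})$; since $\lambda\le d$ in any $(n,d,\lambda)$-graph, this implies the stated lemma. The only small point is that Lemma~\ref{basic1} as written bounds $M$ rather than $|M|$, but its proof (Cauchy--Schwarz) clearly gives the two-sided bound, or equivalently one may apply it to $(-f_1,f_2,f_3,f_4)$.

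Your route differs from the paper's. The paper interleaves the expander mixing lemma with two Cauchy--Schwarz decoupling steps: it first bounds $|N|^2$ by a quantity involving three functions and the constraint $(c,d)\in E_2$, then bounds $|N|^4$ by $M(f_j,f_j,f_j,f_j)+O(\lambda/d)$, and finally takes a fourth root, which is what produces the exponent $1/4$ in the error. You instead \emph{separate} the two mechanisms: two direct applications of the expander mixing lemma (one per factor graph, exploiting the rank-one structure $(f_1f_2)(f_3f_4)$ and $(f_1f_3)(f_2f_4)$) give $|N-M|\le \lambda_1/d_1+\lambda_2/d_2$, and the Cauchy--Schwarz work is entirely delegated to Lemma~\ref{basic1}. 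This modular approach is cleaner, avoids the fourth-root loss, and is closer in spirit to what the paper itself does later in the proof of Theorem~\ref{thm:CS} (where $N\approx M$ is used for product-set indicators). The paper's approach, on the other hand, proves the sharper inequality $|N|^4\le \|f_j\|_\square^4+O(\lambda/d)$ directly, which could in principle be useful if one needed a multiplicative rather than additive comparison.
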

To prove this lemma, we recall the following expander mixing lemma.
\begin{lemma}\label{th:expanderMixing}
Let $G = (V,E)$ be an $(n, d, \lambda)$-graph, and $A$ be its adjacency matrix. For real $f, g\in L^2(V)$, we have 
\[\left|\langle
f,Ag\rangle-d|V|\mathbb{E}(f)\mathbb{E}(g)\right|\leq \lambda\|f\|_2\|g\|_2,\]
where 
\[\mathbb{E}(f):=\frac{1}{|V|}\sum_{v\in V}f(v), ~||f||_2^2=\sum_{v\in V}|f(v)|^2.\]
\end{lemma}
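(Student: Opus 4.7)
The plan is to prove this standard fact by spectral decomposition of the adjacency matrix. Since $G$ is $d$-regular, the all-ones vector $\mathbf{1} \in \mathbb{R}^V$ satisfies $A\mathbf{1} = d\mathbf{1}$, so the normalized vector $v_1 := \mathbf{1}/\sqrt{n}$ is an eigenvector with eigenvalue $d$. Because $A$ is a real symmetric matrix, the spectral theorem provides an orthonormal eigenbasis $v_1, v_2, \ldots, v_n$ of $\mathbb{R}^V$ with real eigenvalues $\mu_1 = d, \mu_2, \ldots, \mu_n$, and by the $(n,d,\lambda)$-hypothesis we have $|\mu_i| \leq \lambda$ for every $i \geq 2$.

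Next, I would expand the two functions in this eigenbasis, writing $f = \sum_{i=1}^n \alpha_i v_i$ and $g = \sum_{i=1}^n \beta_i v_i$ with $\alpha_i = \langle f, v_i\rangle$, $\beta_i = \langle g, v_i \rangle$. The key bookkeeping step is to identify the leading Fourier coefficients with the averages:
\[
\alpha_1 = \langle f, \tfrac{1}{\sqrt{n}}\mathbf{1}\rangle = \frac{1}{\sqrt{n}}\sum_{v\in V} f(v) = \sqrt{n}\,\mathbb{E}(f),
\qquad \beta_1 = \sqrt{n}\,\mathbb{E}(g).
\]
Using orthonormality of the $v_i$ and $Av_i = \mu_i v_i$, expand the bilinear form as $\langle f, Ag\rangle = \sum_{i=1}^n \mu_i \alpha_i \beta_i$ and separate off the principal term $\mu_1 \alpha_1 \beta_1 = d \cdot n \cdot \mathbb{E}(f)\mathbb{E}(g) = d|V|\,\mathbb{E}(f)\mathbb{E}(g)$.

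Finally, I would bound the remainder $\sum_{i\geq 2} \mu_i \alpha_i \beta_i$ by the triangle inequality and then Cauchy--Schwarz:
\[
\Bigl|\sum_{i\geq 2} \mu_i \alpha_i \beta_i\Bigr| \leq \lambda \sum_{i\geq 2} |\alpha_i\beta_i| \leq \lambda \Bigl(\sum_{i\geq 2}\alpha_i^2\Bigr)^{1/2}\Bigl(\sum_{i\geq 2}\beta_i^2\Bigr)^{1/2} \leq \lambda\, \|f\|_2\,\|g\|_2,
\]
where the last step is Parseval's identity $\sum_i \alpha_i^2 = \|f\|_2^2$ together with dropping the $i=1$ terms. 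Combining this with the identification of the principal term yields the claimed inequality. There is no substantial obstacle; the proof is entirely routine linear algebra, and the only point requiring care is the $\sqrt{n}$ normalization that makes $\mu_1\alpha_1\beta_1$ come out to exactly $d|V|\mathbb{E}(f)\mathbb{E}(g)$ rather than $d\,\mathbb{E}(f)\mathbb{E}(g)$ or $d|V|^2\mathbb{E}(f)\mathbb{E}(g)$.
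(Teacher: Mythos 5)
Your proof is correct. The paper does not actually supply a proof of Lemma \ref{th:expanderMixing}; it recalls it as a standard fact (the expander mixing lemma in functional form). Your spectral-decomposition argument is the canonical proof of that fact: diagonalize $A$ with the constant eigenvector $\mathbf{1}/\sqrt{n}$ as $v_1$, peel off the $i=1$ term to get $d|V|\,\mathbb{E}(f)\mathbb{E}(g)$, and bound the tail using $|\mu_i|\le\lambda$ for $i\ge 2$, Cauchy--Schwarz, and Parseval. The normalization bookkeeping you flagged is handled correctly, and the inequalities $\sum_{i\ge 2}\alpha_i^2\le\|f\|_2^2$, $\sum_{i\ge 2}\beta_i^2\le\|g\|_2^2$ close the argument. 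Nothing is missing.
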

\begin{proof}[Proof of Lemma \ref{co2.31}]
Set $\sigma_i(x, y)=|V_i|/d_i$ if $(x, y)\in E_i$ and $0$ otherwise and let $\E_{x,y\in V_i} := \frac{1}{|V_i|^2}\sum_{x,y\in V_i}$. Then for $f, g\colon V_i\to [-1, 1]$, by using the expander mixing lemma, one has 
\[
\sum_{x\sim y} f(x)g(y) =  \langle f, Ag \rangle \leq \frac{d_i}{|V_i|} \sum_{x,y} f(x)g(y) + \lambda_i \|f\|_2\|g\|_2.
\]
Dividing both sides by $d_i|V_i|$ and using $\|f\|_2\|g\|_2 \leq |V_i|$ gives

\[\E_{x, y\in V_i}f(x)g(y)\sigma_i(x, y)=\left(\frac{1}{|V_i|^2}\sum_{x, y}f(x)g(y)\right)+\frac{\lambda_i}{d_i} = \E_{x,y} f(x)f(y) + \frac{\lambda_i}{d_i}.\]
Thus,
\begin{align*}
|\E_{x, y}f(x)g(y)\sigma_i(x,y)|^2&\leq (\E_{x, y, z, t}f(x)g(z)f(y)g(t)) + 2\frac{\lambda_i}{d_i} \E_{x,y}f(x)g(y) + \frac{\lambda_i^2}{d_i^2}\\
&\leq \E_{x,y} f(x)f(y) + 3\frac{\lambda_i}{d_i},
\end{align*}
where we have used the fact that $\E_{z,t}g(z)g(t), \E_{x,y} f(x)g(y) \leq 1$. In other words, for functions $f,g: V_i \to [-1,1],$ we have
\begin{equation}\label{eq3}
|\E_{x, y}f(x)g(y)\sigma_i(x,y)|^2 \le  \E_{x, y}f(x)f(y)+ 3\frac{\lambda_i}{d_i}.
\end{equation}
The same holds when we switch between $f$ and $g$:
\begin{equation}\label{eq4}
|\E_{x, y}f(x)g(y)\sigma_i(x,y)|^2 \le  \E_{x, y}g(x)g(y)+ 3\frac{\lambda_i}{d_i}.
\end{equation}
In the next step, we want to show that \[N(f_1, f_2, f_3, f_4)\le ||f_1||_{\square(V_1\times V_2) }+{O\left(\frac{\lambda}{d}\right)}.\]  Using the definitions, we have
\begin{align*}
N(f_1, f_2, f_3, f_4)&=\E_{a, b, c, d}f_1(a, c)f_2(a, d)f_3(b, c)f_4(b, d)\sigma_1(a,b)\sigma_2(c,d).
\end{align*}
For a fixed pair $(c, d)$, set $f_{c,d}(a)=f_1(a, c)f_2(a, d)$ and $g_{c,d}(b)=f_3(b, c)f_4(b, d).$ Then we have 
\begin{align*}
|N(f_1, f_2, f_3, f_4)|^2 & = \left( \frac{1}{|V_1|^2|V_2|^2} \sum_{a,b,c,d} f_{c,d}(a) g_{c,d}(b) \sigma_1(a,b)\sigma_2(c,d)\right)^2\\
&\leq \left(\frac{1}{|V_1|^2|V_2|^2} \sum_{c,d} \sqrt{\sigma_2(c,d)} \left| \sqrt{\sigma_2(c,d)} \sum_{a,b} f_{c,d}(a)g_{c,d}(b) \sigma_1(a,b)\right| \right)^2\\
& \leq \frac{1}{|V_1|^4 |V_2|^4} \left( \sum_{c,d} \sigma_2(c,d) \right) \left( \sum_{c,d} \sigma_2(c,d) \left| \sum_{a,b}  f_{c,d}(a)g_{c,d}(b) \sigma_1(a,b)\right|^2\right) \\
&= \left( \E_{c,d} \sigma_2(c,d)\right) \left( \E_{c,d}  \sigma_2(c,d) | \E_{a,b}  f_{c,d}(a)g_{c,d}(b) \sigma_1(a,b)|^2\right) &
\\ &=\E_{c,d} \sigma(c,d) | \E_{a,b} f_{c,d}(a)g_{c,d}(b) \sigma(a,b)|^2,
\end{align*}
where the first inequality uses the triangle inequality and rearranging, the second inequality is Cauchy-Schwarz, the next line is rearranging, and the last equality uses the fact that $\E_{c, d}\sigma_2(c,d)=\frac{1}{|V_2|^2}\cdot \frac{|V_2|}{d_2}\cdot |V_2|\cdot d_2=1$.
Therefore, the inequality \eqref{eq3} implies
\begin{align} \label{eq5K}|N(f_1, f_2, f_3, f_4)|^2& \leq \E_{c,d} \sigma_2(c,d) \left(\E_{a,b} f_{c,d}(a)f_{c,d}(b) + 3\frac{\lambda_1}{d_1} \right)\\ &
= \E_{a,b,c,d} \sigma_2(c,d) f_{c,d}(a)f_{c,d}(b) +  3\left(\E_{c,d} \sigma_2(c,d)\right)\frac{\lambda_1}{d_1} \nonumber \\ 
&=\left(\E_{a, b, c, d}f_1(a, c)f_1(b, c)f_2(a, d)f_2(b, d)\sigma_2(c,d)\right)+3\frac{\lambda_1}{d_1} \nonumber.\end{align}

By another similar argument with $\hat{f}_{a,b}(c)=f_1(a, c)f_1(b, c)$ and $\hat{g}_{a,b}(d)=f_2(a, d)f_2(b, d)$ for each fixed pair $(a, b)$, we have
\begin{align*}
    |N(f_1, f_2, f_3, f_4)|^4 &\leq \left( \E_{a,b,c,d} \hat{f}_{a,b}(c) \hat{g}_{a,b}(d) \sigma_2(c,d) + 3\frac{\lambda_1}{d_1} \right)^2 \\
&\leq \left(\E_{a,b,c,d} \hat{f}_{a,b}(c) \hat{g}_{a,b}(d) \sigma_2(c,d)\right)^2 + 15\frac{\lambda_1}{d_1},
\end{align*}
using that $\E_{a,b,c,d} \hat{f}_{a,b}(c) \hat{g}_{a,b}(d) \sigma_2(c,d) \leq 1$ and $\lambda_1/d_1 \leq 1$. Now
\begin{align*}
    \left(\E_{a,b,c,d} \hat{f}_{a,b}(c) \hat{g}_{a,b}(d) \sigma_2(c,d)\right)^2 
= & \frac{1}{|V_1|^4|V_2|^4 }\left(\sum_{a,b} 1 \sum_{c,d}\hat{f}_{a,b}(c) \hat{g}_{a,b}(d) \sigma_2(c,d) \right)^2 \\
\leq & \frac{1}{|V_1|^2 } \sum_{a,b} \left(\frac{1}{|V_2|^2}\sum_{c,d}\hat{f}_{a,b}(c) \hat{g}_{a,b}(d) \sigma_2(c,d) \right)^2\\
 \leq & \frac{1}{|V_1|^2} \sum_{a,b} \left( \E_{c,d} \hat{f}_{a,b}(c) \hat{f}_{a,b}(d) + 3\frac{\lambda_2}{d_2} \right) \\
=& \E_{a,b} \left( \E_{c,d} f_1(a,c)f_1(b,c)f_1(a,d) f_1(b,d) + 3\frac{\lambda_2}{d_2}\right),
\end{align*}
by Cauchy-Schwarz and \eqref{eq3} respectively. As a consequence, we obtain
\begin{align*} |N(f_1, f_2, f_3, f_4)|^4&\le \E_{a,b,c,d}f_1(a,c)f_1(b,c)f_1(a,d) f_1(b,d) + 15\frac{\lambda_1}{d_1} + 3\frac{\lambda_2}{d_2}\\
&=M(f_1,f_1,f_1,f_1)+ O\left(\frac{\lambda_1}{d_1} + \frac{\lambda_2}{d_2}\right).\end{align*}
Notice that the same holds when $f_1$ on the right hand side is replaced by $f_i$ for $2\le i\le 4$. In short,
\[|N(f_1, f_2, f_3, f_4)|\le \min_{j}||f_j||_{\square (V_1\times V_2)}+O\left(\frac{\lambda^{1/4}}{d^{1/4}}\right).\]
This completes the proof. 
\end{proof}

With Lemmas \ref{co2.2} and \ref{co2.31} in hand, we are ready to prove Theorem \ref{thm:CS}.

\paragraph{Proof of Theorem \ref{thm:CS}:}
For any $\epsilon >0,$ by Lemma \ref{co2.2}, we can see that there exist $\sigma$-algebras $\mathcal B$ and $\mathcal C$ on $V_1$ and $V_2$, respectively, with complexity bounded above by $O\left(\epsilon^{-8}\right),$ so that
\begin{equation}\label{epBnd}
    ||S-\E(S| \B\vee\C)||_{\square(V_1\times V_2)}\le \epsilon.
\end{equation}
Let $g$ denote $\E(S| \B\vee\C),$ and define
$$h(x):=S(x) - g(x).$$
Therefore, \eqref{epBnd} gives us that 
\begin{equation}\label{hBnd}
    ||h||_{\square(V_1\times V_2)}\le \epsilon.
\end{equation}
Both $g$ and $h$ are functions from $V_1\times V_2$ to the interval $[-1,1].$
Recalling the definition of $N$ above, we see that
$$ N(S,S,S,S) = N(g,g,g,g)+N(h,h,h,h)+ R,$$
where $R$ is a sum over all expressions of the form
$$N(f_1, f_2, f_3, f_4),$$
where the $f_j$ in each term are either $g$ or $h$, but not all the same. Specifically, set $\Omega :=\{g,h\}^4\setminus\{(g,g,g,g),(h,h,h,h)\},$ denote a quadruple of functions by $F = (f_1, f_2, f_3, f_4)\in\Omega,$ and write
$$R = \sum_{F\in \Omega}\mathbb E_{a,b,c,d}f_1(a,c)f_2(a,d)f_3(b,c)f_4(b,d)E_1(a,b)E_2(c,d),$$
where $E_i(x,y)$ is the indicator that $xy\in E(G_i)$. Combining Lemma \ref{co2.31} and \eqref{hBnd} gives
$$|N(h,h,h,h)|\leq ||h||_{\square (V_1\times V_2)}+O\left(\frac{\lambda^{1/4}}{d^{1/4}}\right)\leq \epsilon+O\left(\frac{\lambda^{1/4}}{d^{1/4}}\right).$$
Similarly, for any other choice of $F\in \Omega,$ we must have $h$ in at least one entry, so we get
$$|N(F)|\leq \min_j||f_j||_{\square (V_1\times V_2)}+O\left(\frac{\lambda^{1/4}}{d^{1/4}}\right) \leq ||h||_{\square V_1\times V_2} +O\left(\frac{\lambda^{1/4}}{d^{1/4}}\right)\leq \epsilon+O\left(\frac{\lambda^{1/4}}{d^{1/4}}\right).$$
Putting these together we get that
\begin{equation}\label{NSgClose}
|N(S,S,S,S)-N(g,g,g,g)|=O\left(\epsilon+ \frac{\lambda^{1/4}}{d^{1/4}}\right)
\end{equation}
Similarly, by Lemma \ref{basic1}, we know that
$$M(F)\le \min_{i}||f_i||_{\square(V_1\times V_2)},$$
so we get that
\begin{equation}\label{MSgClose}
|M(S,S,S,S)-M(g,g,g,g)|=O\left(\epsilon\right).
\end{equation}

By definition, $g$ is a linear combination of indicator functions of atoms of the $\sigma$-algebra $\B \vee \C.$ By Lemma \ref{co2.2}, we know that there is some positive constant $c>0$ so that the number of terms in this linear combination is no more than $2^{c\epsilon^{-8}}.$ So we can write $N(g,g,g,g)$ as a linear combination of terms of the form
\begin{align*}
&N(B_1 \times C_1,B_2 \times C_2,B_3 \times C_3,B_4 \times C_4)\\
&= \mathbb E_{a,b,c,d}(B_1 \times C_1)(a,c)\cdot(B_2 \times C_2)(a,d)\cdot(B_3 \times C_3)(b,c)\cdot(B_4 \times C_4)(b,d)\sigma_1(a,b)\sigma_2(c,d),
\end{align*}
for some atoms $B_j\times C_j$ (and their indicator functions) in $\B \times \C$. Here as before we use $\sigma_i(x,y)$ equals $|V_i|/d_i$ if $\{x,y\}\in E_i$ and $0$ otherwise. However, if we split this up by variables, we get that
\begin{align*}
&N(B_1 \times C_1,B_2 \times C_2,B_3 \times C_3,B_4 \times C_4)\\
&= \mathbb E_{a,b,c,d}(B_1 \cap B_2)(a)\cdot(B_3 \cap B_4)(b)\cdot(C_1 \cap C_3)(c)\cdot(C_2 \cap C_4)(d)\sigma_1(a,b)\sigma_2(c,d)\\
&= \left(\mathbb E_{a,b}(B_1 \cap B_2)(a)\cdot(B_3 \cap B_4)(b)\sigma_1(a,b)\right)\left(\mathbb E_{c,d}(C_1 \cap C_3)(c)\cdot(C_2 \cap C_4)(d)\sigma_2(c,d)\right).
\end{align*}
By applying the expander mixing lemma as in the proof of Lemma \ref{co2.31}, we see
\begin{align*}
&N(B_1 \times C_1,B_2 \times C_2,B_3 \times C_3,B_4 \times C_4)\\
&= \left(\mathbb E_{a,b}(B_1 \cap B_2)(a)\cdot(B_3 \cap B_4)(b) + O\left(\frac{\lambda_1}{d_1}\right)\right)\left(\mathbb E_{c,d}(C_1 \cap C_3)(c)\cdot(C_2 \cap C_4)(d)+O\left(\frac{\lambda_2}{d_2}\right)\right)\\
&=M(B_1 \times C_1,B_2 \times C_2,B_3 \times C_3,B_4 \times C_4)+O\left(\frac{\lambda}{d}\right),
\end{align*}

where the last line uses the definition of $M$ and that each expectation is at most $1$. Since $g$ is a linear combination of at most $2^{c\epsilon^{-8}}$ terms, we see that
\[|N(g,g,g,g) -M(g,g,g,g)| = O\left( 2^{c'\epsilon^{-8}}\frac{\lambda}{d}\right), \]
for some positive constant $c'$. Using \eqref{NSgClose} followed by the previous estimate and \eqref{MSgClose}, we get that for some constant $k>0,$ we have
\begin{align*}
N(S,S,S,S) &\geq N(g,g,g,g) - k\epsilon -k\frac{\lambda^{1/4}}{d^{1/4}}\\
&\geq M(g,g,g,g) -k2^{c'\epsilon^{-8}}\frac{\lambda}{d}- k\epsilon -k\frac{\lambda^{1/4}}{d^{1/4}}\\
&\geq M(S,S,S,S) -k\epsilon- k2^{c\epsilon^{-8}}\frac{\lambda}{d}- k\epsilon -k\frac{\lambda^{1/4}}{d^{1/4}}.
\end{align*}
Now applying Proposition \ref{MCS} to this estimate gives us
\begin{equation}\label{punchline}
N(S,S,S,S) \geq \left(\frac{|S|}{|V_1||V_2|}\right)^4 -2k\epsilon- k2^{c\epsilon^{-8}}\frac{\lambda}{d} -k\frac{\lambda^{1/4}}{d^{1/4}}.
\end{equation}
Recall that by assumption, $|S|\geq \delta |V_1||V_2|,$ so to guarantee that $N = N(S,S,S,S)$ is positive, we just need to pick $\epsilon$ so that the right-hand-side of \eqref{punchline} is bigger than $\delta'^4$, or equivalently,
$$\delta^4 - \delta'^4 \geq 2k\epsilon +k2^{c\epsilon^{-8}}\frac{\lambda}{d} +k\frac{\lambda^{1/4}}{d^{1/4}}.$$

\section{Proof of Theorem \ref{cycle-main1}}
To prove Theorem \ref{cycle-main1}, we present two approaches based on two counting lemmas. 
While the second counting lemma is a direct consequence of the expander mixing lemma for a single graph, the first counting lemma is a stronger and more practical variant for tensor of two pseudo-random graphs, which is quite interesting on its own. 
\subsection{The first counting lemma for cycles}
Let us briefly describe the ideas of counting cycles here. Assume we want to count the number of cycles of length $2k$ for some integer $k\ge 2$. Given four vertices $x, y, z, w$, if $x$ and $y$ are connected by a path of length $k-1$, and the same happens for $z$ and $w$, then we will have a cycle of length $2k$ of the form $x-yw-zx$ (Figure 1) when there are edges between $x$ and $z$, and between $y$ and $w$. Thus, the problem is reduced to counting the number of pairs of edges between the endpoints of pairs of paths of length $k-1$.  

\begin{figure}[h!]
\begin{center}
\includegraphics[width=0.6\textwidth]{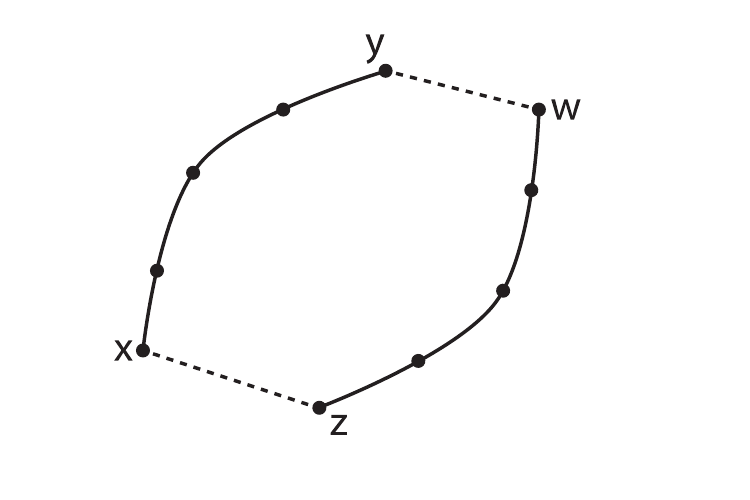}
\caption{Counting pairs of edges $xz$ and $yw$.}
\label{figure1}
\end{center}
\end{figure}
To this end, we make use of the notation of tensor of two pseudo-random graphs. For two graphs $\mathcal{G}_1=(V_1, E_1)$ and $\mathcal{G}_2=(V_2, E_2)$, the tensor product $\mathcal{G}_1\otimes \mathcal{G}_2$ is a graph with vertex set $V(\mathcal{G}_1\otimes \mathcal{G}_2)=V_1\times V_2$, and there is an edge between $(u,v)$ and $(u', v')$ if and only if $(u, u')\in E_1$ and $(v, v')\in E_2$. Suppose that the adjacency matrices of $\mathcal{G}_1$ and $\mathcal{G}_2$ are $A$ and $B$, respectively, then the adjacency matrix of $\mathcal{G}_1\otimes \mathcal{G}_2$ is the tensor product of $A$ and $B$. It is well-known that if $\gamma_1, \ldots, \gamma_n$ are eigenvalues of $A$ and $\gamma_1', \ldots, \gamma_m'$ are eigenvalues of $B$, then the eigenvalues of $A\otimes B$ are $\gamma_i\gamma_j'$ with $1\le i \le n$, $1\le j\le m$ (see \cite{merris1997multilinear} for more details).

It is not hard to use the expander mixing lemma to get the following.
\begin{proposition}\label{weakform}
Let $\mathcal{G}$ be an $(n, d, \lambda)$-graph. For two {non-negative} functions $f, g\colon V\times V\to \mathbb{R}$, we have
\[\left\vert \sum_{(x, z)\in E, (y, w)\in E}f(x, y)g(z, w)-\frac{d^2}{n^2}||f||_1||g||_1 \right\vert\le d\lambda||f||_2||g||_2.\]
\end{proposition}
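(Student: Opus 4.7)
The plan is to recognize the left-hand side as an inner product against the adjacency matrix of the tensor graph $\mathcal{G}\otimes\mathcal{G}$, and then apply the standard expander mixing lemma (Lemma~\ref{th:expanderMixing}) to this larger graph. Specifically, view $f$ and $g$ as vectors in $\mathbb{R}^{V\times V}$ and let $A$ be the adjacency matrix of $\mathcal{G}$. Since $(A\otimes A)_{(x,y),(z,w)} = A_{xz}A_{yw}$ is exactly the indicator that $(x,z)\in E$ and $(y,w)\in E$, one has
\[
\sum_{(x,z)\in E,\,(y,w)\in E} f(x,y)g(z,w) \;=\; \langle f,\,(A\otimes A)\,g\rangle.
\]

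Next I would record the spectral data for $\mathcal{G}\otimes\mathcal{G}$. As noted in the paper, if $A$ has eigenvalues $\gamma_1=d,\gamma_2,\dots,\gamma_n$ with $|\gamma_i|\le\lambda$ for $i\ge 2$, then the eigenvalues of $A\otimes A$ are the products $\gamma_i\gamma_j$. The top eigenvalue is $d\cdot d=d^2$, while every other eigenvalue has absolute value at most $\max\{d\lambda,\lambda^2\}=d\lambda$. Hence $\mathcal{G}\otimes\mathcal{G}$ is an $(n^2,d^2,d\lambda)$-graph.

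Applying Lemma~\ref{th:expanderMixing} to $\mathcal{G}\otimes\mathcal{G}$ with the functions $f,g\colon V\times V\to\mathbb{R}$ then yields
\[
\Bigl|\langle f,(A\otimes A)g\rangle \;-\; d^2\cdot n^2\cdot \mathbb{E}(f)\mathbb{E}(g)\Bigr|\;\le\; d\lambda\,\|f\|_2\|g\|_2,
\]
where the expectation is over the $n^2$ vertices of $V\times V$, so $\mathbb{E}(f)=\frac{1}{n^2}\sum_{x,y}f(x,y)=\|f\|_1/n^2$ and similarly for $g$. Substituting this into the main term gives $d^2 n^2\cdot \frac{\|f\|_1}{n^2}\cdot\frac{\|g\|_1}{n^2}=\frac{d^2}{n^2}\|f\|_1\|g\|_1$, which matches the claimed bound.

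There is essentially no serious obstacle: the only things to verify carefully are (i) the bookkeeping that the double edge-sum really is the bilinear form on $A\otimes A$, and (ii) the elementary eigenvalue computation that the second-largest eigenvalue of $A\otimes A$ in absolute value is $d\lambda$ rather than $\lambda^2$ (which uses $d\ge\lambda$, a standing assumption of the $(n,d,\lambda)$-framework). Neither hypothesis on the signs of $f,g$ is used in the argument, so the non-negativity condition in the statement is in fact unnecessary for this inequality — it is likely included only because the proposition is designed to be applied later with $f,g$ indicator-type functions.
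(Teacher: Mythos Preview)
Your proposal is correct and matches the paper's intended argument exactly: the paper does not spell out a proof of this proposition but simply says ``it is not hard to use the expander mixing lemma to get the following,'' and elsewhere notes explicitly that the tensor product of two $(n,d,\lambda)$-graphs is an $(n^2,d^2,d\lambda)$-graph. One small quibble: the non-negativity of $f,g$ \emph{is} used to equate $\sum_{x,y} f(x,y)$ with $\|f\|_1$ in the main term, so while the underlying expander-mixing inequality holds for arbitrary real $f,g$ with $\sum f$ in place of $\|f\|_1$, the statement as written does use the sign hypothesis.
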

Our first counting lemma offers better bounds as follows.
\begin{proposition}(First counting lemma)\label{keylemma}
Let $\mathcal{G}$ be an $(n, d, \lambda)$-graph. For two {non-negative} functions $f, g\colon V\times V\to \mathbb{R}$, we define $F(x)=\sum_{y}f(x, y)$, $G(z)=\sum_{w}g(z, w)$, $F'(y)=\sum_{x}f(x, y)$, and $G'(w)=\sum_{z}g(z, w)$. Then we have 
\[\left\vert \sum_{(x, z)\in E, (y, w)\in E}f(x, y)g(z, w)-\frac{d^2}{n^2}||f||_1||g||_1 \right\vert\le \lambda^2||f||_2||g||_2+\frac{d\lambda}{n^2}\left(||F||_2||G||_2+||F'||_2||G'||_2\right).\]
\end{proposition}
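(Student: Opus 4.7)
The plan is to recognize the left-hand side as a matrix expression in the adjacency matrix $A$ of $\mathcal{G}$, pass to the spectral decomposition of $A$, and then split the resulting double sum into four blocks according to whether each of the two eigenvector indices is the trivial index $1$ or not. The whole point of the estimate---and the reason one cannot just invoke the expander mixing lemma on the tensor product $\mathcal{G}\otimes\mathcal{G}$ (whose second-largest eigenvalue is $d\lambda$, giving the weaker Proposition \ref{weakform})---is that the blocks in which exactly one index equals $1$ can be controlled by the marginals of $f$ and $g$ rather than by their full Frobenius norms.

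First I would view $f$ and $g$ as $n\times n$ matrices indexed by $V\times V$, and write
\[\sum_{(x,z)\in E,\,(y,w)\in E} f(x,y)g(z,w) \;=\; \sum_{x,y,z,w} A(x,z)A(y,w) f(x,y) g(z,w) \;=\; \langle f, AgA\rangle,\]
with $\langle\cdot,\cdot\rangle$ denoting the Frobenius (entrywise) inner product. Let $d=\mu_1\ge |\mu_2|\ge \cdots \ge |\mu_n|$ be the eigenvalues of $A$ with an orthonormal eigenbasis $v_1,\ldots,v_n$; here $v_1=\mathbf{1}/\sqrt{n}$ and $|\mu_i|\le\lambda$ for $i\ge 2$. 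Substituting $A=\sum_i \mu_i v_i v_i^{\top}$ on both sides of $g$ gives
\[\langle f, AgA\rangle \;=\; \sum_{i,j}\mu_i\mu_j\,(v_i^{\top} f v_j)(v_i^{\top} g v_j).\]

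Second, I would split this double sum according to whether $i=1$ and whether $j=1$. The key elementary identities are $v_1^{\top} f v_1 = ||f||_1/n$, $v_1^{\top} f v_j = \langle F', v_j\rangle/\sqrt{n}$ for $j\ne 1$, and $v_i^{\top} f v_1 = \langle F, v_i\rangle/\sqrt{n}$ for $i\ne 1$ (with analogous identities for $g$, $G$, $G'$). The $(i,j)=(1,1)$ contribution is then exactly $\frac{d^2}{n^2}||f||_1\,||g||_1$, which is the main term to be peeled off.

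Third, I would estimate the three error blocks using $|\mu_i|\le\lambda$ for $i\ne 1$ together with Cauchy-Schwarz and Parseval. The block with both $i\ne 1$ and $j\ne 1$ contributes at most $\lambda^2\,||f||_2\,||g||_2$, via Cauchy-Schwarz over the pair $(i,j)$ and the matrix Parseval identity $\sum_{i,j}(v_i^{\top} f v_j)^2 = ||f||_2^2$. The two mixed blocks $i=1,\,j\ne 1$ and $i\ne 1,\,j=1$ are handled by the same mechanism but with Cauchy-Schwarz and Parseval applied to vectors in $\mathbb{R}^n$: exactly one trivial eigenvector contributes a factor of $d$, the other index supplies a factor $|\mu_j|\le\lambda$ or $|\mu_i|\le\lambda$, and the trivial eigenvector collapses the matching sum to a marginal $\langle F,v_i\rangle$ or $\langle G',v_j\rangle$. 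Parseval in $\mathbb{R}^n$ then turns the sum of products of these inner products into $||F||_2\,||G||_2$ or $||F'||_2\,||G'||_2$, yielding the marginal contribution stated in the proposition.

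There is no single technically difficult step; the work is essentially bookkeeping across the four blocks. The main conceptual point---and the reason to formulate Proposition \ref{keylemma} at all---is that carrying out the spectral expansion \emph{inside} the bilinear form $\langle f, AgA\rangle$ and isolating the all-ones eigenvector $v_1$ automatically replaces the full $L^2$-norms $||f||_2,||g||_2$ (as one would get from applying the expander mixing lemma to $\mathcal{G}\otimes\mathcal{G}$ as a black box) by the substantially smaller row- and column-sum quantities $F,F',G,G'$.
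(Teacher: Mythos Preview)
Your proposal is correct and is essentially the same argument as the paper's, only phrased in matrix language: where the paper works with the tensor product $B=A\otimes A$ acting on $L^2(V\times V)$ and expands in the eigenbasis $\{e_i\otimes e_j\}$, you equivalently write the bilinear form as $\langle f, AgA\rangle$ and expand via $A=\sum_i \mu_i v_iv_i^{\top}$; under the vectorization isomorphism these are the same map, and your coefficients $v_i^{\top} f v_j$ are exactly the paper's $\langle f, e_i\otimes e_j\rangle$. The four-block split $(i=1\text{ or not},\ j=1\text{ or not})$ and the Cauchy--Schwarz/Parseval estimates on each block coincide line for line with the paper's treatment of $S_1,S_2,S_3,S_4$.
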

\begin{proof}
Suppose $\mathcal{G}$ is a $d$-regular graph on vertex set $V$ with $|V|=n$, and let $A$ denote its adjacency matrix.  For two real-valued functions $f, g\colon V\times V\to \mathbb{R}$, we define 
\[\langle f, g\rangle=\sum_{(v_1, v_2)\in V\times V}f(v_1, v_2){g}(v_1, v_2),\]
and 
\[||f||_2^2=\langle f, f\rangle.\]
We denote the set of all real-valued functions on $V\times V$ by $L^2(V\times V)$. {For the remainder of the proof we will assume that $f,g \in L^2(V\times V)$ are non-negative functions.}

We define 
\[A\otimes A f(v_1, v_2)=\sum_{(u_1, u_2)\colon (u_1, v_1)\in E, (u_2, v_2)\in E}f(u_1, u_2).\]
That is, $A\otimes A$ is the adjacency matrix of $\mathcal{G}\otimes \mathcal{G}$. In the remainder, we denote $A\otimes A$ by $B$.

Let $\lambda_1 \geq \lambda_2 \geq \cdots \geq \lambda_n$ be the eigenvalues of $A$ corresponding to eigenfunctions $e_1,\cdots, e_n$. Without loss of generality, assume that the $e_i$ form an orthonormal basis of $\mathbb{R}^n$. Then the eigenfunctions of $B$ are exactly $e_i \otimes e_j$ for all $1\leq i,j\leq n$ corresponding to eigenvalue $\lambda_{ij}:= \lambda_i \lambda_j$. 

We observe that
 \[f=\sum_{i,j}\langle f,e_i\otimes e_j\rangle e_i\otimes e_j.\]
 So 
 \[Bg=\sum_{i,j}\langle Bg,e_i\otimes e_j\rangle
e_i\otimes e_j=\sum_{e_i\otimes e_j}\lambda_{ij}\langle g,e_i\otimes e_j\rangle
e_i\otimes e_j\] 
We note that $A$ has a constant eigenfunction that will be denoted by $e_1$, i.e. 
\[e_1(v)=1/\sqrt{n}, ~\forall v\in V.\]
This means that $B$ also has constant eigenfunction defined by 
\[e_1\otimes e_1(u, v)=1/n\, ~\forall (u, v)\in V\times V.\]

We have 
\[
\sum_{(x, z)\in E, (y, w)\in E}f(x, y)g(z, w)=\langle f, Bg\rangle =\sum_{i,j}\lambda_{ij}\langle g, e_i\otimes e_j\rangle \langle f, e_i\otimes e_j\rangle.
\]
Define
\begin{align*}
    S_1:= &\lambda_{11} \langle g, e_1\otimes e_1\rangle \langle f, e_1\otimes e_1\rangle\\
    S_2:=\sum_{j=2}^n &\lambda_{1j}\langle g, e_1\otimes e_j\rangle \langle f, e_1\otimes e_j\rangle\\
    S_3:=\sum_{i=2}^n &\lambda_{i1} \langle g, e_i\otimes e_1\rangle \langle f, e_i\otimes e_1\rangle\\
    S_4:= \sum_{i,j=2}^n &\lambda_{ij}\langle g, e_i\otimes e_j\rangle \langle f, e_i\otimes e_j\rangle.
\end{align*}
And so
\[
\sum_{(x, z)\in E, (y, w)\in E}f(x, y)g(z, w) - S_1 = S_2 + S_3 + S_4.
\]

We now estimate each $S_i$. Since $\lambda_1 = d$ and $e_1$ is constant, it is easy to see that 
\[
S_1 = \lambda_{11} \left\langle f, \frac{1}{n}\mathbf{1}\right\rangle \left\langle g, \frac{1}{n}\mathbf{1}\right\rangle = \frac{d^2}{n^2} ||f||_1 ||g||_1.
\]

For $S_4$, if $i,j>1$ we have that $\lambda_{ij} \leq \lambda^2$ and hence
\begin{align*}
S_4 \leq \lambda^2 \sum_{i,j=2}^n \langle g, e_i\otimes e_j\rangle \langle f, e_i\otimes e_j\rangle & \leq \lambda^2 \left( \sum_{i,j=2}^n \langle g, e_i\otimes e_j \rangle^2\right)^{1/2}\left( \sum_{i,j=2}^n \langle f, e_i\otimes e_j \rangle^2\right)^{1/2}\\
&\leq \lambda^2 \left( \sum_{i,j=1}^n \langle g, e_i\otimes e_j \rangle^2\right)^{1/2}\left( \sum_{i,j=1}^n \langle f, e_i\otimes e_j \rangle^2\right)^{1/2}\\
&= \lambda^2 ||f||_2 ||g||_2,
\end{align*}
where the second inequality follows by Cauchy-Schwarz.

To estimate $S_2$, note that $\lambda_{1j} \leq \lambda d$, and 
\[
e_1\otimes e_j (v_1, v_2) = \frac{1}{\sqrt{n}} e_j(v_2).
\]
Using Cauchy-Schwarz, we have that 
\begin{align*}
S_2 \leq \lambda d \sum_{j=2}^n \langle g, e_1\otimes e_j \rangle \langle f, e_1\otimes e_j\rangle & \leq \lambda d \left(\sum_{j=2}^n \langle g, e_1\otimes e_j\rangle^2\right)^{1/2}\left(\sum_{j=2}^n \langle f, e_1\otimes e_j\rangle^2\right)^{1/2}\\
& \leq \lambda d \left(\sum_{j=1}^n \langle g, e_1\otimes e_j\rangle^2\right)^{1/2}\left(\sum_{j=1}^n \langle f, e_1\otimes e_j\rangle^2\right)^{1/2}
\end{align*}
To estimate this quantity, note that
\[
\langle g, e_1\otimes e_j\rangle  = \sum_{u,v} g(u,v) e_1\otimes e_j (u,v) = \frac{1}{\sqrt{n}} \sum_{u,v} g(u,v) e_j(v) = \frac{1}{\sqrt{n}} \sum_v G'(v) e_j(v),
\]
and similarly $\langle f, e_1\otimes e_j\rangle = \frac{1}{\sqrt{n}} \sum_v F'(v)e_j(v)$. Therefore, we have that 
\[
\sum_{j=1}^n \langle g, e_1\otimes e_j\rangle^2 = \sum_{j=1}^n \frac{1}{n}\sum_{u,v} G'(u)G'(v)e_j(u)e_j(v) = \frac{1}{n} \sum_{u,v} \left( G'(u)G'(v) \sum_{j=1}^n e_j(u)e_j(v)\right).
\]
Now notice that because the $e_i$ form an orthonormal basis, we have that 
\[
\sum_{j=1}^n e_j(u)e_j(v) = \begin{cases}
1 & u=v\\
0 & u\not=v.
\end{cases}
\]
Hence we have 
\[
\sum_{j=1}^n \langle g, e_1 \otimes e_j\rangle^2 = \frac{1}{n} \sum_{u=1}^n \left((G'(u))^2 \sum_{j=1}^n e_j(u)^2\right) = \frac{1}{n}\sum_{u=1}^n (G'(u))^2 = \frac{1}{n}||G'||_2^2.
\]
Similarly $\sum_{j=1}^n \langle f, e_1 \otimes e_j\rangle^2 = \frac{1}{n} ||F'||_2^2$. Combining everything we have that 
\[
S_2 \leq \frac{\lambda d}{n^2} ||G'||_2 ||F'||_2.
\]
A symmetric proof shows that 
\[
S_3 \leq \frac{\lambda d}{n^2} ||G||_2 ||F||_2.
\]
\end{proof}
\subsection{The second counting lemma for cycles}
Assume we want to count the number of cycles of length $2k$ for some integer $k\ge 1$. Our second strategy for cycles can be explained as follows. Given three vertices $x, y,$ and $z$, if $x$ and $y$ are connected by a path of length $k$, and $x$ and $z$ are connected by a path of length $k-1$, then we have a cycle of length $2k$ of the form $x-yz-x$ if and only if $y$ and $z$ are adjacent (Figure 2). So the problem is reduced to counting the number of edges between the endpoints of pairs of paths pinned at a vertex. 
\begin{figure}[h!]
\begin{center}
\includegraphics[width=0.6\textwidth]{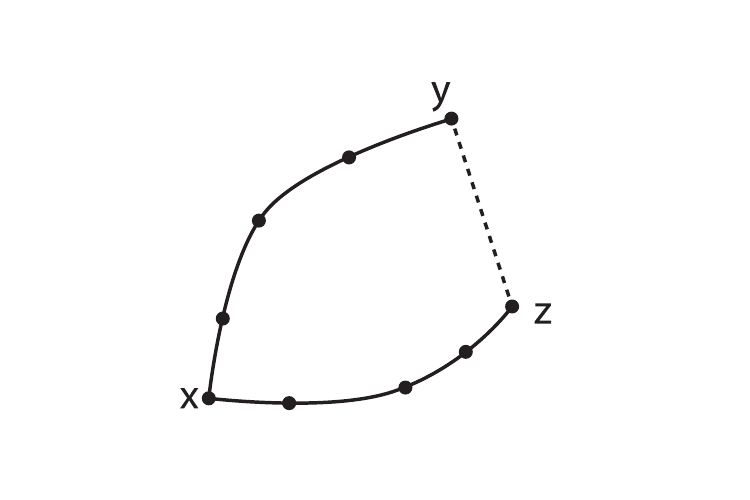}
\caption{Counting edges $yz$.}
\label{figure2}
\end{center}
\end{figure}
\begin{proposition}(Second counting lemma)\label{prop-cycle1}
Let $\mathcal{G}$ be an $(n, d, \lambda)$-graph. Let $U$ be a set of vertices in $\mathcal{G}$. For any two vertices $x$ and $y$, let $p_k(x, y)$ be the number of paths of length $k$ between $x$ and $y$ with vertices in between belonging to $U$. Then we have   
\[\left\vert C_{2k+1}(U)-\frac{d}{n}\sum_{x\in U}\left(\sum_{y\in U}p_k(x, y)\right)^2\right\vert\le \lambda \sum_{x, y\in U}p_k(x, y)^2,\]
and 
\[\left\vert C_{2k}(U)-\frac{d}{n}\sum_{x\in U}\left(\sum_{y\in U}p_k(x, y)\right)\cdot \left(\sum_{z\in U}p_{k-1}(x, z)\right)\right\vert\le \lambda \sum_{x\in U}\left(\sum_{y\in U}p_k(x, y)^2\right)^{1/2}\cdot \left(\sum_{z\in U}p_{k-1}(x, z)^2\right)^{1/2}.\]
\end{proposition}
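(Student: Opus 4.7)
The plan is to reduce counting cycles in $U$ to a weighted application of the expander mixing lemma (Lemma~\ref{th:expanderMixing}) by rooting each labeled cycle at a starting vertex and splitting it into two sub-paths joined by a single closing edge.

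The first step is a combinatorial identity. Given a labeled cycle $(v_1, \ldots, v_m)$ with all $v_i \in U$, root it at $x := v_1$. For $m = 2k+1$, walking $k$ steps in each of the two directions around the cycle produces two paths of length $k$ with internal vertices in $U$: one from $x$ to $y := v_{k+1}$ and one from $x$ to $z := v_{k+2}$, joined by the single edge $v_{k+1}v_{k+2}$. Reading this decomposition backwards, labeled cycles of length $2k+1$ in $U$ are in bijection with the data of a root $x\in U$, endpoints $y, z\in U$ with $y\sim z$, and a pair of paths of length $k$ with internal vertices in $U$ (one from $x$ to $y$, one from $x$ to $z$). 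This yields
\[ C_{2k+1}(U) = \sum_{x \in U} \sum_{\substack{y, z \in U \\ y \sim z}} p_k(x, y)\, p_k(x, z). \]
The analogous rooting for $m = 2k$ instead splits the cycle into a path of length $k$ and a path of length $k-1$ joined by an edge, giving
\[ C_{2k}(U) = \sum_{x \in U} \sum_{\substack{y, z \in U \\ y \sim z}} p_k(x, y)\, p_{k-1}(x, z). \]

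For the second step, I fix $x \in U$ and introduce $f_x(v) := p_k(x, v)\mathbf{1}_U(v)$ and $g_x(v) := p_{k-1}(x, v)\mathbf{1}_U(v)$ on $V(\mathcal{G})$. The inner sums above become $\langle f_x, A f_x\rangle$ and $\langle f_x, A g_x\rangle$, respectively, where $A$ is the adjacency matrix. Since $d|V|\mathbb{E}(f_x)\mathbb{E}(g_x) = \tfrac{d}{n}\bigl(\sum_v f_x(v)\bigr)\bigl(\sum_v g_x(v)\bigr)$, Lemma~\ref{th:expanderMixing} gives
\[ \left| \langle f_x, A f_x \rangle - \frac{d}{n}\Big(\sum_{y \in U} p_k(x, y)\Big)^2 \right| \le \lambda \|f_x\|_2^2 = \lambda \sum_{y \in U} p_k(x, y)^2, \]
and the analogous inequality for $\langle f_x, A g_x\rangle$ with right-hand side $\lambda \|f_x\|_2\|g_x\|_2$. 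Summing over $x\in U$ and applying the triangle inequality then produces exactly the error terms claimed in the proposition, with no further Cauchy--Schwarz needed.

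The one place to be careful is the combinatorial identity. Since the paper counts any closed walk $(v_1,\ldots,v_m)$ with $v_i \sim v_{i+1}$ and $v_m \sim v_1$ as a cycle (including degenerate ones where vertices repeat), the functions $p_k(x,y)$ must correspondingly be interpreted as counting walks of length $k$ rather than simple paths, and one should verify that the rooted decomposition is a genuine bijection in this generality. This is bookkeeping rather than a genuine obstacle; beyond it, the entire proof rests only on the expander mixing lemma for a single $(n,d,\lambda)$-graph applied one vertex $x$ at a time.
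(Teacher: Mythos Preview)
Your proposal is correct and follows essentially the same argument as the paper: express $C_m(U)$ as $\sum_{x\in U}\sum_{(y,z)\in E} f_x(y)g_x(z)$ with $f_x(y)=U(y)p_k(x,y)$ and $g_x$ defined analogously, then apply the expander mixing lemma (Lemma~\ref{th:expanderMixing}) for each fixed $x$ and sum. Your remark about interpreting $p_k$ as walk counts (to match the paper's convention of counting degenerate cycles) is the only point the paper leaves implicit, and you handle it correctly.
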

\begin{proof}
We first observe that the number of odd cycles of length $2k+1$ in $U$ is equal to the sum 
\[\sum_{x, y, z\in U^3, (y, z)\in E(G)}p_k(x, y)p_k(x, z).\]
Given $x\in U$, set $f(y)=U(y)p_k(x, y)$, then the above sum can be rewritten as 
\[\sum_{x\in U}\sum_{(y, z)\in E(G)}f(y)f(z).\]
Applying Lemma \ref{th:expanderMixing}, the first statement is proved.

For the second statement, as above, the number of even cycles of length $2k$ in $U$ is equal to the sum 
\[\sum_{x, y, z\in U^3, (y, z)\in E(G)}p_k(x, y)p_{k-1}(x, z).\]
Given $x\in U$, set $f(y)=U(y)p_k(x, y)$ and $g(z)=U(z)p_{k-1}(x, z)$, then the above sum can be rewritten as 
\[\sum_{x\in U}\sum_{(y, z)\in E(G)}f(y)g(z).\]
Applying Lemma \ref{th:expanderMixing}, the proposition is proved.
\end{proof}
Using the facts that 
\[\sum_{x\in U}\left(\sum_{y\in U}p_k(x, y)\right)^2=P_{2k}(U),\]
\[\sum_{x, y\in U}p_{k}(x, y)^2=C_{2k}(U),\]
\[\sum_{x\in U}\left(\sum_{y\in U}p_k(x, y)\right)\cdot \left(\sum_{z\in U}p_{k-1}(x, z)\right)=P_{2k-1}(U),\]
and the following application of Cauchy-Schwarz,
\[\sum_{x\in U}\left(\sum_{y\in U}p_k(x, y)^2\right)^{1/2}\cdot \left(\sum_{z\in U}p_{k-1}(x, z)^2\right)^{1/2}\le \left( C_{2k}(U)C_{2k-2}(U)\right)^{1/2},\]
one derives the following corollary. 
\begin{corollary}\label{cor:5.2-odd}
Let $\mathcal{G}$ be an $(n, d, \lambda)$-graph. Let $U$ be a set of vertices in $\mathcal{G}$. Then 
\[\left\vert C_{2k+1}(U)-\frac{d}{n}P_{2k}(U) \right\vert\le \lambda C_{2k}(U),\]
and 
\[\left\vert C_{2k}(U)-\frac{d}{n}P_{2k-1}(U) \right\vert\le \lambda \left( C_{2k}(U)C_{2k-2}(U)\right)^{1/2}.\]
\end{corollary}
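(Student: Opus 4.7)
}
The plan is to apply Proposition \ref{prop-cycle1} directly and then rewrite the main terms and error terms in terms of $P_{2k}(U)$, $P_{2k-1}(U)$, $C_{2k}(U)$, and $C_{2k-2}(U)$ via the four identities stated in the paragraph immediately preceding the corollary. Since each of these identities is a short combinatorial bookkeeping step (or, in one case, a single application of Cauchy--Schwarz), I do not expect any serious obstacle; the corollary should fall out as a short consequence of the proposition.

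First I would verify the identities by a ``midpoint'' decomposition of paths and cycles. For the identity $\sum_{x\in U}\bigl(\sum_{y\in U}p_k(x,y)\bigr)^2 = P_{2k}(U)$, a labeled path of length $2k$ has $2k+1$ vertices, and pinning its middle vertex $x$ splits the path into an ordered pair of $U$-internal paths of length $k$ emanating from $x$ with free endpoints $y$ and $y'$; summing the square over $x$ recovers all such paths exactly once. For $\sum_{x,y\in U} p_k(x,y)^2 = C_{2k}(U)$, pinning the antipodal pair $(x,y)=(v_1,v_{k+1})$ in a labeled cycle $v_1 v_2\cdots v_{2k}$ decomposes the cycle into an ordered pair of length-$k$ paths from $x$ to $y$ with $U$-internal vertices, and the correspondence is bijective since the pair $(v_1,v_{k+1})$ is already distinguished in a labeled cycle. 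For $\sum_{x\in U}\bigl(\sum_{y\in U}p_k(x,y)\bigr)\cdot\bigl(\sum_{z\in U}p_{k-1}(x,z)\bigr) = P_{2k-1}(U)$, pinning the $(k+1)$-th vertex $x$ of a path $v_1\cdots v_{2k}$ of length $2k-1$ yields independently a path of length $k$ from $x$ to $v_1=y$ and a path of length $k-1$ from $x$ to $v_{2k}=z$.

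Next, I would feed these identities back into Proposition \ref{prop-cycle1}. For the odd case, the bound
\[
\left|C_{2k+1}(U) - \frac{d}{n}\sum_{x\in U}\Bigl(\sum_{y\in U}p_k(x,y)\Bigr)^2\right| \le \lambda \sum_{x,y\in U} p_k(x,y)^2
\]
turns immediately into $|C_{2k+1}(U) - \frac{d}{n}P_{2k}(U)| \le \lambda\, C_{2k}(U)$. For the even case, the main term becomes $\frac{d}{n}P_{2k-1}(U)$, while for the error I would apply Cauchy--Schwarz in $x$: with $a_x := \bigl(\sum_{y\in U} p_k(x,y)^2\bigr)^{1/2}$ and $b_x := \bigl(\sum_{z\in U} p_{k-1}(x,z)^2\bigr)^{1/2}$, one has
\[
\sum_{x\in U} a_x b_x \;\le\; \Bigl(\sum_{x\in U} a_x^2\Bigr)^{1/2}\Bigl(\sum_{x\in U} b_x^2\Bigr)^{1/2} \;=\; \bigl(C_{2k}(U)\, C_{2k-2}(U)\bigr)^{1/2},
\]
using the already-verified identity $\sum_{x,y}p_m(x,y)^2 = C_{2m}(U)$ applied with $m=k$ and $m=k-1$. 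This yields the stated second inequality. The only mildly subtle point is identifying the correct pinning vertex in each of the three combinatorial identities; once that is done the rest is mechanical, so no obstacle is anticipated.
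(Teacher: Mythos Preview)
Your proposal is correct and follows essentially the same route as the paper: apply Proposition~\ref{prop-cycle1}, replace the main and error terms using the three combinatorial identities, and bound the even-case error by Cauchy--Schwarz in $x$ exactly as you wrote. The paper simply asserts the identities as ``facts'' in the paragraph before the corollary, whereas you additionally sketch their verifications via midpoint pinning, but the argument is otherwise identical.
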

\subsection{Distribution of paths}
We have seen that to apply the two counting lemmas, we need to have estimates on the paths of a given length in a vertex set. We now provide relevant results on paths.
\begin{proposition}\label{paths}
Let $\mathcal{G}$ be an $(n, d, \lambda)$-graph, $k\ge 1$ an integer,  and $U$ be a vertex set with $\lambda \cdot \frac{n}{d} = o(|U|)$. Let $P_k(U)$ denotes the number of paths of length $k$ in $U$. Then we have 
\[P_k(U)=\left[1 + \Theta\left( \frac{\lambda n}{d|U|}\right) \right]\frac{|U|^{k+1}d^k}{n^k}.\]
\end{proposition}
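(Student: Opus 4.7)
The plan is to reformulate $P_k(U)$ as a quadratic form in the induced adjacency matrix and then run a spectral argument. Writing $A[U]$ for the $|U|\times|U|$ adjacency matrix of the induced subgraph $\mathcal{G}[U]$ and $\mathbf{1}_U$ for the all-ones vector on $U$, every length-$k$ walk whose vertices lie in $U$ is a length-$k$ walk in $\mathcal{G}[U]$, so
\[
P_k(U) \;=\; \mathbf{1}_U^T A[U]^k \mathbf{1}_U.
\]
Diagonalize $A[U]$ by an orthonormal basis $v_1,\dots,v_{|U|}$ with eigenvalues $\mu_1\ge\mu_2\ge\cdots\ge\mu_{|U|}$, and set $b_i := \langle \mathbf{1}_U, v_i\rangle/\sqrt{|U|}$, so that $\sum_i b_i^2 = 1$ and
\[
P_k(U) \;=\; |U|\sum_{i=1}^{|U|} \mu_i^k\, b_i^2.
\]
The goal is to show that the $i=1$ term supplies the main order and that the rest is swept into the tolerated error.

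Three ingredients drive the argument. First, since $A[U]$ is a principal submatrix of the full adjacency matrix $A$, Cauchy interlacing forces $|\mu_i|\le\lambda$ for every $i\ge 2$. Second, by the expander mixing lemma (Lemma~\ref{th:expanderMixing}), $\mathbf{1}_U^T A\mathbf{1}_U = 2e(U) = \tfrac{d|U|^2}{n}+O(\lambda|U|)$, so testing the Rayleigh quotient at $\mathbf{1}_U/\sqrt{|U|}$ yields $\mu_1\ge\tfrac{d|U|}{n}+O(\lambda)$. For the matching upper bound, decompose an arbitrary unit vector $v$ supported on $U$ as $v=c\,\mathbf{1}_U/\sqrt{|U|}+v^{\perp}$ with $v^{\perp}\perp\mathbf{1}_U$. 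Since $v^{\perp}$ sums to zero on $U$ and vanishes outside $U$, it is orthogonal to $\mathbf{1}_V$ and therefore lies in the spectral-gap subspace of $A$, giving $(v^{\perp})^T A v^{\perp}\le\lambda\|v^{\perp}\|_2^2$; the cross term $\mathbf{1}_U^T A v^{\perp}$ is controlled by EML (using $\mathbf{1}_V^T v^{\perp}=0$), and expanding gives $v^T A[U] v\le \tfrac{d|U|}{n}+O(\lambda)$. Hence $\mu_1 = \tfrac{d|U|}{n}+O(\lambda)$. Third, specializing the identity above to $k=1$ yields
\[
\mu_1 b_1^2 \;=\; \frac{2e(U)}{|U|} - \sum_{i\ge2}\mu_i b_i^2 \;=\; \frac{d|U|}{n} + O(\lambda),
\]
from which dividing by $\mu_1$ produces $b_1^2 = 1 + O(\lambda n/(d|U|))$ and thus $\sum_{i\ge2} b_i^2 = O(\lambda n/(d|U|))$.

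The computation then finishes cleanly. The leading term is
\[
|U|\,\mu_1^k\,b_1^2 \;=\; \frac{|U|^{k+1}d^k}{n^k}\,\bigl(1 + O(\lambda n/(d|U|))\bigr),
\]
after writing $\mu_1^k = (d|U|/n)^k(1+O(\lambda n/(d|U|)))^k$ and invoking the hypothesis $\lambda n/(d|U|)=o(1)$ (with implied constants allowed to depend on $k$). The tail obeys
\[
\Bigl|\,|U|\sum_{i\ge2}\mu_i^k b_i^2\Bigr|
\;\le\; |U|\,\lambda^k\sum_{i\ge2} b_i^2
\;=\; O\bigl(|U|\lambda^k \cdot \lambda n/(d|U|)\bigr),
\]
and since $\lambda\le d|U|/n$ we may replace $\lambda^k$ by $(d|U|/n)^k$, which makes the tail contribution of the same order as the admissible error. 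Summing the two pieces yields the asserted estimate.

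The main obstacle is the upper bound $\mu_1 \le \tfrac{d|U|}{n}+O(\lambda)$, which is far stronger than the trivial $\mu_1\le d$ and is essential for pinning down $b_1^2$. The key observation making this work is that in the decomposition $v=c\,\mathbf{1}_U/\sqrt{|U|}+v^{\perp}$ of a vector supported on $U$, the perpendicular piece automatically lands in the spectral-gap subspace of $A$ on the ambient space, so EML applies without losing a factor of $n/|U|$; without this, one would only obtain $\|\rho_k\|_2 \lesssim k d^{k-1}\lambda\sqrt{|U|}$ from naive iteration, which is too weak by a factor $(n/|U|)^{k-1}$.
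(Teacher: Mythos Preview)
Your proof is correct and takes a genuinely different route from the paper's. The paper argues by induction on $k$: applying the expander mixing lemma to the functions $f(u)=$ ``number of length-$k$ walks in $U$ ending at $u$'' and the analogous length-$(k-1)$ function, it derives the recursions
\[
\left|P_{2k+1}(U)-\tfrac{d}{n}P_k(U)^2\right|\le \lambda P_{2k}(U),\qquad
\left|P_{2k}(U)-\tfrac{d}{n}P_k(U)P_{k-1}(U)\right|\le \lambda\sqrt{P_{2k}(U)P_{2k-2}(U)},
\]
and then solves the resulting quadratic in $\sqrt{P_{2k}(U)}$ (resp.\ bounds $P_{2k+1}$ directly) to close the induction. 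You instead diagonalize $A[U]$ once and for all, use Cauchy interlacing to pin $|\mu_i|\le\lambda$ for $i\ge 2$, and prove the sharper fact $\mu_1=\tfrac{d|U|}{n}+O(\lambda)$ together with $b_1^2=1+O(\lambda n/(d|U|))$; the walk count then drops out of $P_k(U)=|U|\sum_i \mu_i^k b_i^2$ with no induction. Your argument is cleaner and in a sense more informative, since it explains \emph{why} the path count behaves this way (the induced subgraph is itself nearly regular with a spectral gap). The paper's approach, on the other hand, is more elementary---it uses only Lemma~\ref{th:expanderMixing} and no interlacing---and its intermediate recursions are exactly the identities that feed into the cycle-counting machinery (Corollary~\ref{cor:5.2-odd}), so there is some economy in proving them en route.
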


\begin{proof}
We first prove the following two estimates: 
\begin{equation}\label{eq:odd}\left\vert P_{2k+1}(U)-\frac{dP_k(U)^2}{n}\right\vert\le \lambda P_{2k}(U), \end{equation}and 
\begin{equation}\label{even}~~\left\vert P_{2k}(U)-\frac{dP_k(U)P_{k-1}(U)}{n}\right\vert\le \lambda\sqrt{P_{2k}(U)P_{2k-2}(U)}.\end{equation}
For $u\in U$, let $f(u)$ be the number of paths of length $k$ of the form $(u_1, \ldots, u_k, u)$ where $u_i\in U$. Similarly, for $v\in U$, let $g(v)$ be the number of paths of length $k$ of the form $(v_1, \ldots, v_k, v)$ where $v_i\in U$. To use Lemma \ref{th:expanderMixing} we need to estimate the norms and the inner product. We have that the adjacency matrix $A$ acts on $g$ by the formula
\[
Ag(u)=\sum_{(u,v)\in E(\mathcal{G})}g(v)
\]
which is the number of paths of length $k+1$ of the form $(v_1,\dots,v_k,v,u)$. For the inner product, we have
\[
\langle f, Ag \rangle= \sum_{u\in V(\mathcal{G})}f(u){Ag}(u)=\sum_{u\in V(\mathcal{G})}f(u)Ag(u)=P_{2k+1}(U).
\]

It is clear that 
\[
   \mathbb{E}(f)=\mathbb{E}(g)=\frac{1}{|V|}\cdot P_{k}(U).
\]
and
\[||f||_2^2=||g||_2^2=P_{2k}(U).\]
Applying Lemma \ref{th:expanderMixing} we have that 
\[
\left| P_{2k+1}(U)-d|V|\left(\frac{1}{|V|}\cdot P_{k}(U) \right)^{2} \right|\leq
\lambda P_{2k}(U)\]
which is equivalent to \eqref{eq:odd}. The estimate \eqref{even} also follows from a similar argument with the same $f$ and $g(v)$ defined to be the number of paths of length $k-1$ of the form $(v_1,\dots,v_{k-1},v)$. 

We now proceed by induction on $k$. The case $k=0$ is trivial and the case $k=1$ follows from Lemma \ref{th:expanderMixing} and the estimate (\ref{even}). 

Suppose that the statement holds for all $2k\ge 1$. We now show that it also holds for $2k+1$ and $2k+2$. Indeed, 
it follows from the estimate (\ref{eq:odd}) and induction hypothesis that we have
\begin{align*}
P_{2k+1}(U)&\le \frac{d}{n}P_k(U)^2+\lambda P_{2k}(U) \\&\le 
\frac{d}{n}\frac{|U|^{2k+2}d^{2k}}{n^{2k}}\left(1 + O\left(\frac{\lambda n}{d |U|}\right)\right)^2 + \lambda \frac{|U|^{2k+1} d^{2k}}{n^{2k}}\left(1 + O\left(\frac{\lambda n}{d |U|}\right)\right) \\&=
\frac{|U|^{2k+2}d^{2k+1}}{n^{2k+1}} \left(1 + O\left(\frac{\lambda n}{d|U|}\right)\right)
\end{align*}
whenever $|U|\gg \lambda \frac{n}{d}$. The lower bound follows in the same way.

For the case $2k+2$, it also follows from the estimate (\ref{even}) that 
\[P_{2k+2}(U)\le \frac{dP_{k}(U)P_{k+1}(U)}{n}+\lambda \sqrt{P_{2k}(U)P_{2k+2}(U)}.\]
Solving this inequality in $x=\sqrt{P_{2k+2}(U)}$, we obtain 
\[
P_{2k+2}(U) \leq \left(  \frac{-\lambda\sqrt{P_{2k}(U)} + \sqrt{\lambda^2 P_{2k}(U) + \frac{4dP_k(U)P_{k+1}(U)}{n}}}{2}\right)^2.
\]
Using the induction hypothesis and that $\frac{\lambda n}{d}=o(|U|),$ we have that
$$\lambda^2P_{2k}(U) = o\left(\frac{dP_k(U)P_{k+1}(U)}{n}\cdot \frac{\lambda n}{d|U|} \right)$$
and that 
\[
\lambda\sqrt{P_{2k}(U)} \sqrt{\frac{dP_k(U)P_{k+1}(U)}{n}} = O \left( \frac{|U|^{2k+3}d^{2k+2}}{n^{2k+2}} \cdot \frac{\lambda n}{d|U|}\right).
\]
Hence the entire expression is bounded above by 
\[
\frac{|U|^{2k+3}d^{2k+2}}{n^{2k+2}}\left[1 + O\left(\frac{\lambda n}{d|U|} \right)\right].
\]
Using lower bounds of the estimates (\ref{eq:odd}) and (\ref{even}), and an identical argument also gives us
\[P_{k}(U)\ge \left[1-O\left(\frac{\lambda n}{d |U|}\right)\right]|U|^{k+1} \left(\frac{d}{n}\right)^{k},\]
under the condition $\lambda \frac{n}{d}=o(|U|)$. This completes the proof of the proposition. 
\end{proof}

\subsection{Proof of Theorem \ref{cycle-main1} using the first counting lemma}
\begin{proof}[Proof of Theorem \ref{cycle-main1}]
We proceed by induction on $m$. 

We first start with the base case $m=4$. 

Let $f,g: U\times U \to \mathbb{R}$ defined by 
\[
f(x,y)=\begin{cases}
1,~ \text{if } (x,y)\in E(\mathcal{G})\\
0,~ \text{otherwise}
\end{cases}
\]
and
\[
g(z,w)=\begin{cases}
1,~ \text{if } (z,w)\in E(\mathcal{G})\\
0,~ \text{otherwise.}
\end{cases}
\]
It is clear that $C_4(U)=\sum_{(x,z), (y,w)\in E(\mathcal{G})}f(x, y)g(z, w)$. To apply Proposition \ref{keylemma}, we need to check the norms of functions $f$, $g$, $F$, $G$, $F'$, and $G'$.

Using Proposition \ref{paths}, we have
\begin{align*}
||f||_1&=\sum_{x,y \in U} f(x,y)=P_1(U)=\left(1+\Theta\left(\frac{\lambda n}{d|U|}\right)\right)|U|^2\frac{d}{n},\\
||f||_2&=\left(\sum_{x,y \in U} f(x,y)^2 \right)^{1/2}=\left(\sum_{x,y \in U} f(x,y) \right)^{1/2}=\left(P_1(U) \right)^{1/2}=\left(1+\Theta\left(\frac{\lambda n}{d|U|}\right)\right)|U|\sqrt{ \frac{d}{n}},
\end{align*}
using the Taylor series for $\sqrt{1+x}$ and the assumption that $ \frac{\lambda n}{d} = o(|U|)$.
Similarly,
\begin{align*}
    ||g||_1 =\left(1+\Theta\left(\frac{\lambda n}{d|U|}\right)\right) |U|^{2}\frac{d}{n},~ \text{ and } ||g||_2 =\left(1+\Theta\left(\frac{\lambda n}{d|U|}\right)\right) |U|\sqrt{ \frac{d}{n}}.
\end{align*}
For functions $F, G, F', G'$ defined as in Proposition \ref{keylemma}, we have that
\[
||F||_2=\left(\sum_{x \in U} F(x)^2 \right)^{1/2}=(P_2(U))^{1/2}=\left( \frac{|U|^3 d^2}{n^2} \left(1+\Theta\left(\frac{\lambda n}{d|U|}\right)\right)\right)^{1/2}.
\]
Similarly,
\[
||G||_2=||F'||_2=||G'||_2=\left( \frac{|U|^3 d^2}{n^2} \left(1+\Theta\left(\frac{\lambda n}{d|U|}\right)\right)\right)^{1/2}.
\]
Substituting these estimates into Proposition \ref{keylemma}, we have that 
\[
\left|C_4(U)-\left(1+\Theta\left(\frac{\lambda n}{d|U|}\right)\right)|U|^4\frac{d^4}{n^4} \right| \leq \frac{\lambda^2|U|^2 d}{n}\left( 1 + O\left( \frac{\lambda n}{d|U|}\right)\right) + \frac{2\lambda |U|^3 d^3}{n^4}\left( 1 + O\left( \frac{\lambda n}{d|U|}\right)\right).
\]
Using the assumption that $\frac{\lambda n}{d} = o(|U|)$ completes this case.

Assume that the statement holds for any cycle of length smaller than $m-1$, we now show that it holds for cycles of length $m$. 

We fall into two cases:

{\bf Case $1$: $m=2k+1$.}

As above, for $x, y\in U$, we define 
\[f(x, y)=\text{the number of paths of length $k$ between $x$ and $y$},\]
and 
\[g(x, y)=\text{the number of paths of length $k-1$ between $x$ and $y$}.\]
Then
\begin{align*}
    &||f||_1=P_{k}(U)=\left(1+\Theta\left(\frac{\lambda n}{d|U|} \right)\right)\frac{d^{k}}{n^{k}}|U|^{k+1},\\
    &||f||_2^2=C_{2k}(U) = (1+o(1))\frac{|U|^{2k}d^{2k}}{n^{2k}} + O\left(\frac{\lambda^{2k-2}|U|^2 d}{n} \right),\\
    &||F||_2^2=||F'||_2^2=P_{2k}(U) = (1+o(1))\frac{d^{2k}}{n^{2k}}|U|^{2k+1},
\end{align*}
and 

\begin{align*}
    &||g||_1=P_{k-1}(U)=\left(1+\Theta\left(\frac{\lambda n}{d|U|} \right)\right)\frac{d^{k-1}}{n^{k-1}}|U|^k, \\
    &||g||_2^2=C_{2}(U) = (1+o(1)) \frac{|U|^{2}d}{n} \quad \mbox{if}\,\, m=5\\
     &||g||_2^2=C_{2k-2}(U) = (1+o(1)) \frac{|U|^{2k-2}d^{2k-2}}{n^{2k-2}} + O\left(\frac{\lambda^{2k-4}|U|^2 d}{n} \right) \quad \mbox{if}\,\, m\geq 7\\
    &||G||_2^2=||G'||_2^2=P_{2k-2}(U)=(1+o(1))\frac{d^{2k-2}}{n^{2k-2}}|U|^{2k-1}
\end{align*}

Applying Proposition \ref{keylemma}, 
\[
\left| C_{2k+1}(U) - \frac{d^2}{n^2}P_k(U)P_{k-1}(U)\right| \leq \lambda^2 \sqrt{C_{2k}(U)C_{2k-2}(U)} + 2\frac{d\lambda}{n^2}(\sqrt{P_{2k}(U)P_{2k-2}(U)})
\]
When $m=5$, we have 
\[
\left|C_5(U) - \left(1 + \Theta\left(\frac{\lambda n}{d|U|}\right)\right)\frac{|U|^5 d^5}{n^5}\right| = O\left(\lambda^2 \sqrt{\frac{|U|^6d^5}{n^5} + \frac{\lambda^2|U|^4 d^2}{n^2} } + \frac{\lambda d^4 |U|^4}{n^5}\right).
\]
Note that $\frac{\lambda d^4 |U|^4}{n^5} = o \left(\frac{\lambda n}{d|U|} \frac{|U|^5d^5}{n^5}\right)$. If $|U| \leq \frac{\lambda n^{3/2}}{d^{3/2}}$, then the second term in the square root is bigger than the first, and hence
\[
\lambda^2 \sqrt{\frac{|U|^6d^5}{n^5} + \frac{\lambda^2|U|^4 d^2}{n^2} } = O\left(\frac{\lambda^3 |U|^2 d}{n} \right).
\]
If $|U| \geq \frac{\lambda n^{3/2}}{d^{3/2}}$ then the first term is bigger than the second and we have 
\[
\lambda^2 \sqrt{\frac{|U|^6d^5}{n^5} + \frac{\lambda^2n^4 d^2}{n^2} } = O\left(\frac{\lambda^2 |U|^3 d^{5/2}}{n^{5/2}} \right).
\]
Using the assumption that $|U| \geq \frac{\lambda n^{3/2}}{d^{3/2}}$ gives that $\frac{\lambda^2 |U|^3 d^{5/2}}{n^{5/2}} \leq \frac{\lambda |U|^4 d^4}{n^4} = \frac{\lambda n}{d|U|} \frac{|U|^5d^5}{n^5}$. In either case the inequality is satisfied. 

For $m \geq 7$ we have 
\begin{align*}
&\left|C_{2k+1}(U) - \left(1 + \Theta\left(\frac{\lambda n}{d|U|}\right)\right)\frac{|U|^{2k+1} d^{2k+1}}{n^{2k+1}}\right| = \\ &O\left(\lambda^2 \sqrt{\left[\frac{|U|^{2k}d^{2k}}{n^{2k}} + \frac{\lambda^{2k-2}|U|^2d}{n} \right]\left[\frac{|U|^{2k-2}d^{2k-2}}{n^{2k-2}} + \frac{\lambda^{2k-4}|U|^2d}{n} \right]} + \frac{\lambda |U|^{2k} d^{2k}}{n^{2k+1}}\right).
\end{align*}
First note that $\frac{\lambda |U|^{2k} d^{2k}}{n^{2k+1}} = o\left( \frac{\lambda n}{d} \frac{|U|^{2k+1} d^{2k+1}}{n^{2k+1}}\right)$ so we may ignore this term. Hence if each of the four terms 
\[
\frac{|U|^{4k-2} d^{4k-2}}{n^{4k-2}},\quad \frac{\lambda^{2k-2}|U|^{2k}d^{2k-1}}{n^{2k-1}}, \quad \frac{\lambda^{2k-4}|U|^{2k+2}d^{2k+1}}{n^{2k+1}},\quad \frac{\lambda^{4k-6}|U|^4d^2}{n^2},
\]
is either 
\[
O\left( \frac{|U|^{4k}d^{4k}}{\lambda^2 n^{4k}}\right) \quad \mbox{or} \quad O\left( \frac{\lambda^{4k-6}|U|^4 d^2}{n^2}\right),
\]
then we are done. The fourth term trivially satisfies the inequality. The first term is $o\left( \frac{|U|^{4k}d^{4k}}{\lambda^2 n^{4k}}\right)$ and 
\[
\frac{\lambda^{2k-2}|U|^{2k}d^{2k-1}}{n^{2k-1}} = o\left( \frac{\lambda^{2k-4}|U|^{2k+2}d^{2k+1}}{n^{2k+1}}\right)
\]
by the assumption that $\frac{\lambda n}{d} = o(|U|)$. 
Finally, if $|U| \geq \lambda \left(\frac{n}{d}\right)^{(2k-1)/(2k-2)}$ then 
\[
\frac{\lambda^{2k-4}|U|^{2k+2}d^{2k+1}}{n^{2k+1}} \leq \frac{|U|^{4k}d^{4k}}{\lambda^2 n^{4k}}. 
\]
Otherwise 
\[
\frac{\lambda^{2k-4}|U|^{2k+2}d^{2k+1}}{n^{2k+1}} \leq\frac{\lambda^{4k-6}|U|^4 d^2}{n^2}.
\]
\medskip

{\bf Case $2$: $m=2k$.}

For this case, we want to apply Proposition \ref{keylemma} again, so we need to define suitable functions $f$ and $g$, namely, for $x, y\in U$, \[f(x, y)=g(x, y)=\text{the number of paths of length $k-1$ between $x$ and $y$}.\]

Then, by inductive hypothesis and Proposition \ref{paths}, one has
\begin{align*}
    &||f||_1=||g||_1=P_{k-1}(U)=\left( 1 + \Theta\left( \frac{\lambda n}{d|U|}\right)\right)\frac{d^{k-1}}{n^{k-1}}|U|^k,\\
    &||f||_2^2=||g||_2^2=C_{2k-2}(U) =  (1 + o(1))\frac{|U|^{2k-2}d^{2k-2}}{n^{2k-2}}+\Theta\left(\lambda^{2k-4}\frac{d}{n}|U|^2\right),\\
    &||F||_2^2=||G||_2^2=||F'||_2^2=||G'||_2^2=P_{2k-2}(U)=(1+o(1))\frac{d^{2k-2}}{n^{2k-2}}|U|^{2k-1}.
\end{align*}
By applying Proposition \ref{keylemma} and the estimates above, we get that
\[
\left| C_{2k}(U) - \frac{d^2}{n^2} P_{k-1}(U)\right| \leq \lambda^2 C_{2k-2}(U) + 2\frac{\lambda d}{n^2}P_{2k-2}(U),
\]
and hence
\[
\left|C_{2k}(U) - \left(1 + \Theta\left(\frac{\lambda n}{d|U|} \right) \right)\frac{|U|^{2k}d^{2k}}{n^{2k}} \right| =  O\left(\frac{|U|^{2k-2}d^{2k-2}\lambda^2}{n^{2k-2}} + \frac{\lambda^{2k-2} |U|^2 d}{n}  +  \frac{\lambda |U|^{2k-1} d^{2k-1}}{n^{2k}}\right).
\]
Since $\frac{\lambda |U|^{2k-1} d^{2k-1}}{n^{2k}}$ and $\frac{|U|^{2k-2}d^{2k-2}\lambda^2}{n^{2k-2}}$ are both $o\left(\frac{\lambda n}{d} \frac{|U|^{2k}d^{2k}}{n^{2k}} \right)$ (the latter because of the assumption that $\frac{\lambda n}{d} = o(|U|)$), we are done.

\end{proof}
\subsection{Proof of Theorem \ref{cycle-main1} using the second counting lemma}
\begin{proof}[Proof of Theorem \ref{cycle-main1}]
Using the second counting lemma, we are able to prove Theorem \ref{cycle-main1} for all $m\ge 5$, i.e.
\[C_m(U)=  \frac{|U|^md^m}{n^m}+ \Theta\left( \frac{\lambda |U|^{m-1}d^{m-1}}{n^{m-1}}+ \lambda^{m-2}\frac{d}{n}|U|^2\right),\]
but for $m=4$, the result becomes slightly weaker, namely,
{
\[
C_4(U) = O\left( \frac{|U|^4 d^4}{n^4} + \frac{\lambda^2 |U|^2 d}{n}\right).
\]
}
We proceed by induction. 
{\bf Case $1$: $m=2k$.}

For $m=4$, by Corollary \ref{cor:5.2-odd} and Proposition \ref{paths}, we have that 
\begin{equation}\label{eqc4}
\left| C_4(U) - \frac{|U|^4 d^4}{n^4}\left( 1 + \Theta\left( \frac{\lambda n}{d|U|}\right) \right) \right| \leq \sqrt{(1+o(1))\frac{|U|^2\lambda^2 d}{n}C_4(U)},
\end{equation}
using that $C_2(U) = P_2(U)$ and $P_2(U) = \frac{d |U|^2}{n}(1+o(1))$ by the assumption that $\frac{\lambda n}{d} = o(|U|)$. Hence we may set up a quadratic in $\sqrt{C_4(U)}$ to obtain 
\begin{align*}
C_4(U) &\leq \left( \frac{\sqrt{(1+o(1))\frac{|U|^2\lambda^2 d}{n}} + \sqrt{(1+o(1))\frac{|U|^2\lambda^2 d}{n} + 4\frac{|U|^4 d^4}{n^4}\left( 1 + \Theta\left( \frac{\lambda n}{d|U|}\right) \right) }}{2} \right)^2\\
& = O\left( \frac{|U|^4 d^4}{n^4} + \frac{\lambda^2 |U|^2 d}{n}\right).
\end{align*}
This gives the desired estimate for $C_4$. If one wishes to have the main term $\frac{|U|^4d^4}{n^4}$ instead of $c\frac{|U|^4d^4}{n^4}$, for some positive constant $c$, with this approach, then it can be pushed further as follows. 

{
Using the above upper bound for $C_4$ and the estimate (\ref{eqc4}) gives us
\[
\left\vert C_4(U) - \frac{|U|^4 d^4}{n^4}\left( 1 + \Theta\left( \frac{\lambda n}{d|U|}\right) \right)\right\vert = O\left(\sqrt{\frac{|U|^4\lambda^4 d^2}{n^2} + \frac{|U|^6\lambda^2 d^5}{n^5}}\right).
\]
This gives 
\[C_4(U)=\frac{|U|^4d^4}{n^4}+\Theta\left(\frac{\lambda^2d|U|^2}{n}+\frac{\lambda |U|^3d^3}{n^3}+  \frac{|U|^3\lambda d^{5/2}}{n^{5/2}}\right).\]


}

Note that this gives the estimate \eqref{cycle} under the more restrictive condition that $\lambda \frac{n^{3/2}}{d^{3/2}} = o(|U|)$.

Assume the upper bound holds for all cycles of length at most $m-1$. We now show that it also holds for cycles of length $m$. Indeed, if $m=2k$, then we can apply Corollary \ref{cor:5.2-odd} to have 
\[C_{2k}(U)\le \frac{d}{n}P_{2k-1}(U)+\lambda \left( C_{2k}(U)C_{2k-2}(U)\right)^{1/2}.\]

Solving a quadratic in $\sqrt{C_{2k}(U)}$ gives 
\[
C_{2k}(U) \leq \left(\frac{ \lambda\sqrt{C_{2k-2}(U)} + \sqrt{\lambda^2 C_{2k-2}(U) + 4\frac{d}{n}P_{2k-1}(U)}}{2} \right)^2 
\]
Using Proposition \ref{paths} gives that 
\[
C_{2k}(U) - \frac{|U|^{2k}d^{2k}}{n^{2k}}\left(1 + \Theta\left( \frac{\lambda n}{d||U|}\right) \right)  = O\left(\lambda^2 C_{2k-2}(U) + \lambda\sqrt{\lambda^2 (C_{2k-2}(U))^2 + \frac{d}{n}C_{2k-2}(U)P_{2k-1}(U)} \right).
\]
By the inductive hypothesis and the assumption that $\frac{\lambda n}{d} = o(|U|)$, we have that 
\[
\lambda^2C_{2k-2}(U) = O\left(\frac{\lambda |U|^{2k-1}d^{2k-1}}{n^{2k-1}} + \frac{\lambda^{2k-2}|U|^2d}{n} \right),
\]
and hence we are done as long as 
\[
\lambda\sqrt{\frac{d}{n}C_{2k-2}(U)P_{2k-1}(U)}  = O\left(\frac{\lambda |U|^{2k-1}d^{2k-1}}{n^{2k-1}} + \frac{\lambda^{2k-2}|U|^2d}{n} \right).
\]
By the inductive hypothesis and Proposition \ref{paths}, we have 
\[
\lambda\sqrt{\frac{d}{n}C_{2k-2}(U)P_{2k-1}(U)} = O\left(\sqrt{ \frac{\lambda^2 |U|^{4k-2}d^{4k-2}}{n^{4k-2}} + \frac{\lambda^3 |U|^{4k-3}d^{4k-3}}{n^{4k-3}} + \frac{\lambda^{2k-2}|U|^{2k+2}d^{2k+1}}{n^{2k+1}}} \right).
\]
Since $\frac{\lambda n}{d} = o(|U|)$ we have that $\frac{\lambda^3 |U|^{4k-3}d^{4k-3}}{n^{4k-3}}=o\left(\frac{\lambda^2 |U|^{4k-2}d^{4k-2}}{n^{4k-2}}\right)$. Therefore, because 
\[
\sqrt{\frac{\lambda^2 |U|^{4k-2}d^{4k-2}}{n^{4k-2}}} = \frac{\lambda |U|^{2k-1} d^{2k-1}}{n^{2k-1}},
\]
we are done as long as $\frac{\lambda^{k-1} |U|^{k+1} d^{(2k+1)/2}}{n^{(2k+1)/2}}$ is small enough. If $|U| \geq \frac{\lambda n^{(2k-3)/(2k-4)}}{d^{(2k-3)/(2k-4)}}$ then 
\[
\frac{\lambda^{k-1} |U|^{k+1} d^{(2k+1)/2}}{n^{(2k+1)/2}} \leq \frac{\lambda |U|^{2k-1}d^{2k-1}}{n^{2k-1}}.
\]
Otherwise, we have 
\[
\frac{\lambda^{k-1} |U|^{k+1} d^{(2k+1)/2}}{n^{(2k+1)/2}} \leq \frac{\lambda^{2k-2}|U|^2d}{n},
\]
and the upper bound is complete. An analogous calculation gives the corresponding lower bound and we omit the details.

{\bf Case 2: $m=2k+1$.}

This case follows directly from Corollary \ref{cor:5.2-odd} and the case $m=2k$ above.
\end{proof}
\section{Proofs of Theorem \ref{tree-thm1} and Theorem \ref{tree-thm12}}
\subsection{Technical lemmas}

To prove Theorems \ref{tree-thm1} and \ref{tree-thm12}, we use the following results, which are direct consequences of the expander mixing lemma. The first result guarantees that vertex sets bigger than $\lambda n / d$ will have an edge of each color.

\begin{lemma}\label{lm1}
Let $G$ be an $(n, d, \lambda)$-colored graph with color set $D$, and $A, B \subset V(G)$ with $|A|=|B| > \frac{\lambda n}{d}$. Then for each color $c\in D$, there exists an edge $uv$ of color $c$ with $u\in A$ and $v \in B$. In other words, every vertex set of size greater than $\frac{\lambda n}{d}$ determines every color. 
\end{lemma}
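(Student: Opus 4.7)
The plan is a direct application of the expander mixing lemma (Lemma \ref{th:expanderMixing}) to the monochromatic subgraph of the fixed color. Since $G$ is an $(n,d,\lambda)$-colored graph, for each color $c\in D$ the subgraph $G_c$ consisting only of the edges of color $c$ is itself an $(n,d,\lambda)$-graph. This reduces the problem from the colored setting back to the single-graph setting where the expander mixing lemma is directly available.

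The first step is to fix an arbitrary color $c\in D$ and let $A_c$ denote the adjacency matrix of $G_c$. Then I will set $f=\mathbf{1}_A$ and $g=\mathbf{1}_B$ and apply Lemma \ref{th:expanderMixing} to $G_c$. Noting that $\langle f,A_c g\rangle = e_c(A,B)$ (the number of edges of color $c$ with one endpoint in $A$ and the other in $B$), $\mathbb{E}(f)=|A|/n$, $\mathbb{E}(g)=|B|/n$, and $\|f\|_2\|g\|_2=\sqrt{|A||B|}$, the lemma yields
\[
\left| e_c(A,B) - \frac{d|A||B|}{n}\right| \le \lambda \sqrt{|A||B|}.
\]
In particular,
\[
e_c(A,B) \ge \frac{d|A||B|}{n} - \lambda\sqrt{|A||B|} = \sqrt{|A||B|}\left(\frac{d\sqrt{|A||B|}}{n} - \lambda\right).
\]

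The second step is to use the hypothesis $|A|=|B|>\lambda n/d$. Then $\sqrt{|A||B|}=|A|>\lambda n/d$, so the factor $\frac{d\sqrt{|A||B|}}{n}-\lambda$ is strictly positive. This gives $e_c(A,B)>0$, so there is at least one edge of color $c$ between $A$ and $B$, as required. Since the color $c$ was arbitrary, this holds for every $c\in D$.

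There is no real obstacle here; the statement is essentially a one-line consequence of the expander mixing lemma applied to the monochromatic graph $G_c$. The only thing to be careful about is the normalization in the version of the expander mixing lemma stated in the paper, which uses expectations rather than sums, but the translation to the familiar $|e(A,B)-d|A||B|/n|\le \lambda\sqrt{|A||B|}$ form is immediate by plugging in indicator functions.
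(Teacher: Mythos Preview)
Your proposal is correct and follows essentially the same approach as the paper: fix a color $c$, apply the expander mixing lemma to the $(n,d,\lambda)$-graph $G_c$ with $f=\mathbf{1}_A$ and $g=\mathbf{1}_B$, obtain $e_c(A,B)\ge \frac{d}{n}|A||B|-\lambda\sqrt{|A||B|}$, and use $|A|=|B|>\lambda n/d$ to conclude this is strictly positive. The only cosmetic difference is that you factor out $\sqrt{|A||B|}$ before specializing $|A|=|B|$, whereas the paper sets $|A|=|B|$ first and factors out $|A|$.
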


\begin{proof}
For each color $c$ in $D$, let $G_c$ be the induced graph on $c$, then $G_c$ is an $(n,d,\lambda)$-graph. Applying Lemma \ref{th:expanderMixing} with 
\[
f(u)=\begin{cases}
1,~ \text{if } u\in A\\
0,~ \text{otherwise}
\end{cases}
\]
and
\[
g(v)=\begin{cases}
1,~ \text{if } v\in B\\
0,~ \text{otherwise},
\end{cases}
\]
we have 
\begin{align*}
    \langle f,Ag\rangle= e(A,B)\coloneqq  \left|\{(a,b)\in A\times B:ab\in E(G)\}\right|.
\end{align*}
It is clear that 
\[
\mathbb{E}(f)=\frac{|A|}{n},~ \mathbb{E}(g)=\frac{|B|}{n} 
\]
and
\[
\|f\|_2=\sqrt{|A|},~\|g\|_2=\sqrt{|B|}.
\]
Then we have
\[
\left|e(A,B)-\frac{d}{n}|A||B| \right|\leq \lambda\sqrt{|A||B|}.
\]
So
\[
    e(A,B)\geq \frac{d}{n}|A||B|-\lambda\sqrt{|A||B|}. 
\]
Since $|A|=|B| > \frac{\lambda n}{d}$,
\begin{align*}
    e(A,B)&\geq \frac{d}{n}|A|^2-\lambda|A|\geq|A|\left(\frac{d}{n}\cdot|A|-\lambda \right)> |A|\left(\frac{d}{n}\cdot\frac{\lambda n}{d}-\lambda  \right)> 0.
\end{align*}
Which means that there exists at least one edge of color $c$ between $A$ and $B$.  
\end{proof}


The next technical lemma uses the previous result to give an upper bound on the number of vertices with small degree of a given edge color.

\begin{lemma}\label{main4}
Let $G$ be an $(n, d, \lambda)$-colored graph with color set $D$, and let $U\subset V(G)$ with $|U|=r\cdot \frac{\lambda n}{d}$. Then for any fixed color $d\in D$, $s \in \mathbb{N}$, there are at most $s\cdot \frac{\lambda n}{d}$ vertices of $U$ for which  each of them is incident with less than $s$ edges colored by $d$. 
\end{lemma}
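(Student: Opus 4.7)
The plan is a direct application of the expander mixing lemma (Lemma \ref{th:expanderMixing}) to the monochromatic subgraph corresponding to the fixed color. To avoid the notational clash in the statement, I will denote the fixed color by $c$ and keep $d$ for the degree. The subgraph $G_c$ induced by the color-$c$ edges is itself an $(n, d, \lambda)$-graph by hypothesis. Write $A$ for the set of vertices of $U$ that are incident to fewer than $s$ edges of color $c$; the goal is to show that $|A| \leq s \cdot \lambda n/d$.

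The argument proceeds in three short steps. First, the very definition of $A$ gives the trivial upper bound
\[
\sum_{v \in A} \deg_{G_c}(v) < s|A|.
\]
Second, applying Lemma \ref{th:expanderMixing} to $G_c$ with $f = \chi_A$ and $g$ the all-ones indicator on $V(G)$ produces the pseudo-random lower bound
\[
\sum_{v \in A} \deg_{G_c}(v) \;\geq\; d|A| - \lambda \sqrt{|A| \, n}.
\]
Third, combining these two inequalities yields $(d-s)|A| < \lambda \sqrt{|A| n}$, which after squaring and rearranging becomes
\[
|A| \;<\; \frac{\lambda^{2} n}{(d-s)^{2}}.
\]

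The only delicate point is matching this sharper estimate to the stated form $|A| \leq s \cdot \lambda n / d$, which amounts to the routine arithmetic inequality $\lambda d \leq s(d-s)^{2}$. This is comfortable in the pseudo-random regime $\lambda \ll d$ in which the rest of the paper operates (for instance, for the finite-field examples from the introduction one has $\lambda / d \asymp q^{-(d-1)/2}$, so $4\lambda/d \leq s$ is automatic for any $s \geq 1$). If one wishes to bypass this arithmetic gap altogether, one can alternatively apply expander mixing with $f = g = \chi_A$ and use the bound $e_c(A, A) \leq \sum_{v \in A} \deg_{G_c}(v) < s|A|$; this gives the parallel estimate $|A| < (s + \lambda) n / d$, which again matches the stated form in the same regime. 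I do not anticipate any conceptual obstacle: the proof is a summation-level version of Lemma \ref{lm1}, in which the degree hypothesis on $A$ contradicts the standard pseudo-random lower bound on the degree sum.
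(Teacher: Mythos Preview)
There is a genuine gap. The lemma, as it is used in the paper (see the proof of Lemma~\ref{main5}), concerns the color-$c$ degree of vertices \emph{within} $U$, not their degree in the full monochromatic graph $G_c$. Since $G_c$ is $d$-regular, $\deg_{G_c}(v)=d$ for every vertex $v$, so your ``trivial upper bound'' $\sum_{v\in A}\deg_{G_c}(v)<s|A|$ asserts $d<s$, which is false in any regime of interest. Conversely, if you intend $\deg_{G_c}(v)$ to mean the number of color-$c$ neighbors of $v$ lying in $U$, then your step~2 measures the wrong thing: with $g=\mathbf 1_{V(G)}$ the expander mixing lemma returns $\sum_{v\in A}\deg_{G_c}(v)=d|A|$ exactly (with zero error), not the restricted degree sum you need. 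Either way, steps~1 and~2 are bounding two different quantities, and combining them is illegitimate. Your alternative with $f=g=\chi_A$ is at least internally consistent and does yield $|A|<(s+\lambda)n/d$, but this is strictly weaker than the stated bound $s\cdot\lambda n/d$ (it already fails at $s=1$, and even for $s\ge 2$ it requires $\lambda\ge s/(s-1)$), so it does not prove the lemma as written.

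The paper's argument is different and avoids any arithmetic slack. The set $A$ of vertices with fewer than $s$ color-$c$ neighbors in $U$, viewed inside the color-$c$ graph induced on $A$ itself, has maximum degree at most $s-1$; hence it admits a proper vertex-coloring with $s$ colors by the greedy bound $\chi\le\Delta+1$. Each color class is an independent set in the color-$c$ graph, and Lemma~\ref{lm1} forces any such independent set to have size at most $\lambda n/d$. Summing over the $s$ classes gives $|A|\le s\cdot\lambda n/d$ on the nose. If you want to salvage an expander-mixing approach, the natural fix is to take $g=\chi_U$ so that both sides of the comparison concern $e_c(A,U)$; this leads to a bound of the shape $\lambda^2|U|/\bigl(d|U|/n-s\bigr)^2$, which matches $s\cdot\lambda n/d$ only under additional hypotheses, whereas the coloring argument needs none.
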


\begin{proof}
Let $H$ be induced graph on color $d$. Consider the subgraph $H^*$ of $H$ generated by only those vertices of degree less than $s$, so $H^*$ can be $s$-colorable. That is, we have a vertex partition into $s$ independent sets. Using Lemma \ref{lm1}, an independent set in $H$ (and thus in $H^*$) has size at most $\frac{\lambda n}{d}$. Otherwise, by Lemma \ref{lm1}, every vertex set of size greater than $\frac{\lambda n}{d}$ determines every color, which means there exists two vertices connected by a $d$-color edge, contradicting the independence. As a result $|V(H^{*})|\leq s\cdot \frac{\lambda n}{d}$, proving the lemma. 
\end{proof}

The next lemma develops this further by giving lower bounds on the number of disjoint copies of star graphs.

\begin{lemma}\label{main5}
Let $G$ be an $(n, d, \lambda)$-colored graph with color set $D$, and let $U\subset V(G)$ with $|U|=r\cdot \frac{\lambda n}{d}$. Then the number of vertex disjoint copies of the nonempty star graph $K_{1,m}$ with any fixed edge-coloring from $D$ is at least $\frac{r-m}{m+1} \cdot \frac{\lambda n}{d}$.  
\end{lemma}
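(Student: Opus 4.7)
The plan is to produce the disjoint copies greedily, maintaining a residual set $W \subseteq U$: start with $W = U$, repeatedly extract a copy of the prescribed $K_{1,m}$ from $W$, and delete its $m+1$ vertices. The single step behind this procedure is a local existence claim: whenever $|W| > m \cdot \frac{\lambda n}{d}$, a copy of $K_{1,m}$ realizing the prescribed edge-coloring exists inside $W$. Iterating this claim and counting how many $(m+1)$-tuples can be removed before $|W|$ drops to the threshold $m\cdot \frac{\lambda n}{d}$ immediately yields at least $\frac{r-m}{m+1} \cdot \frac{\lambda n}{d}$ disjoint copies.

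For the local existence claim, let $a_c$ denote the number of edges of color $c$ in the prescribed edge-coloring, so that $\sum_{c\in D} a_c = m$. For each color $c$ with $a_c \ge 1$, define $B_c \subseteq W$ to be the set of vertices having fewer than $a_c$ color-$c$ neighbors inside $W$. The key estimate will be
\[
|B_c| \;\le\; a_c \cdot \frac{\lambda n}{d}.
\]
I intend to prove this by the same argument used for Lemma \ref{main4}: the color-$c$ subgraph of $G$ induced on $B_c$ has maximum degree less than $a_c$, hence is properly $a_c$-colorable, and every resulting color class is an independent set in the color-$c$ spanning subgraph of $G$ (itself an $(n,d,\lambda)$-graph), which has size at most $\frac{\lambda n}{d}$ by Lemma \ref{lm1}. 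Summing over $c$ gives $|\bigcup_c B_c| \le m \cdot \frac{\lambda n}{d}$.

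Consequently, when $|W| > m \cdot \frac{\lambda n}{d}$ some vertex $v \in W \setminus \bigcup_c B_c$ exists, and this $v$ has at least $a_c$ color-$c$ neighbors inside $W$ for every color $c$ appearing on the star. Choosing $a_c$ such neighbors per color yields the $m$ leaves of the star; the choices for different colors are automatically disjoint, because each edge incident to $v$ carries a single color. Iterating and summing, by the time the greedy procedure halts we have removed at least $(r-m)\cdot\frac{\lambda n}{d}$ vertices from $U$ in blocks of size $m+1$, so we have constructed at least $\frac{r-m}{m+1}\cdot \frac{\lambda n}{d}$ vertex-disjoint copies of $K_{1,m}$.

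The only real subtlety I anticipate is that Lemma \ref{main4} is phrased in terms of degrees in $G$, whereas the greedy step needs the analogous estimate for degrees inside the residual set $W$. However, the proof of Lemma \ref{main4} uses only the universal bound $\frac{\lambda n}{d}$ on an independent set of any color, which is insensitive to the ambient vertex set, and so the adaptation to $W$ is immediate. I do not foresee any further obstacle beyond bookkeeping this adaptation.
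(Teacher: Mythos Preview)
Your proposal is correct and follows essentially the same approach as the paper. The paper phrases it as taking a \emph{maximal} vertex-disjoint collection of copies of $K_{1,m}$ and arguing (via the same ``bad set'' estimate $|B_c|\le a_c\cdot \frac{\lambda n}{d}$ from Lemma \ref{main4}) that the leftover set has size at most $m\cdot\frac{\lambda n}{d}$; your greedy extraction is just the constructive version of the same idea, and the local existence step you isolate is exactly the content of the paper's argument on $U\setminus H$.
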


\begin{proof}
Let $T$ be the maximal set of copies of $K_{1,m}$ in $U$, and $H$ be the union of all copies in $T$. Then $U-H$ will have no copies of $K_{1,m}$. 

Suppose the set of color of $K_{1,m}$ is $\{c_{1}, c_{2}, \dots, c_{t}\}$ with multiplicities $\{m_{1}, m_{2}, \dots, m_{t} \}$. Using Lemma \ref{main4}, for each $i$ there are at most $m_{i}\cdot \frac{\lambda n}{d}$ vertices that are incident with fewer than $m_{i}$ edges colored by $c_{i}$. Summing over $i$ we get that there are at most
\[\sum_{i=1}^{t}m_i \cdot \frac{\lambda n}{d} = m \cdot \frac{\lambda n}{d}\]
vertices of $U-H$ which are not colored $c_i$ from at least $m_i$ other vertices of $U-H$ for every $i$. If vertex $v\in U-H$ is incident with at least $m_{i}$ edges color $c_{i}$ for every $i,$ then $v$ is the singleton bipartition set of an instance of $K_{1,m}$. Thus $|U-H|\le m\cdot \frac{\lambda n}{d}$. By disjointness 
\[|T|=\frac{|H|}{m+1} \ge \frac{r\cdot \frac{\lambda n}{d}-m\cdot \frac{\lambda n}{d}}{m+1} = \frac{r-m}{m+1}\cdot \frac{\lambda n}{d} \]
as required.

\end{proof}
Our final technical lemma is a simple application of Lemma \ref{main4} that gives a lower bound on the number of disjoint edges of a given color in a vertex set.
\begin{lemma}\label{lm2}
Let $G$ be an $(n,d,\lambda)$-colored graph with color set $D$, and let $U \subset V(G)$ with $|U| \geq \frac{2\lambda n}{d}$. Then for each color $c \in D$, the number of disjoint $c$ colored edges in $U$ is at least $\frac{|U|}{2}-\frac{\lambda n}{d}$. 
\end{lemma}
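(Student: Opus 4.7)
The plan is to exhibit a large $c$-colored matching in $U$ directly by taking a maximum such matching and using the pseudo-random edge-density estimate to bound the number of unmatched vertices. First, I would fix a color $c \in D$, let $M$ be a maximum matching in $U$ consisting of edges of color $c$, and set $I := U \setminus V(M)$, the vertices of $U$ that $M$ fails to cover. The point is that maximality of $M$ forces $I$ to contain no $c$-colored edge, so we only need to bound $|I|$.

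This bound comes straight from Lemma \ref{lm1} applied with $A = B = I$. If $|I| > \lambda n / d$, the lemma produces an edge of color $c$ whose two endpoints both lie in $I$ (the endpoints are distinct because $G$ is simple), contradicting the maximality of $M$. Hence $|I| \leq \lambda n / d$.

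Since the matching $M$ covers exactly $|U| - |I|$ vertices of $U$ and these come in pairs, this immediately yields
\[
|M| \;=\; \frac{|U|-|I|}{2} \;\geq\; \frac{|U| - \lambda n/d}{2} \;\geq\; \frac{|U|}{2} - \frac{\lambda n}{d},
\]
which is the claimed lower bound on the number of disjoint $c$-colored edges in $U$. The assumption $|U| \geq 2\lambda n/d$ is only used to guarantee that this bound is nonnegative, and so carries content.

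I do not expect any genuine obstacle here: the whole argument reduces to the independence-number estimate already packaged in Lemma \ref{lm1}, plus the standard observation that the vertices missed by a maximum matching form an independent set. The only mild subtlety is checking that the edge produced by Lemma \ref{lm1} with $A=B=I$ is not a loop, which is immediate from the simplicity of $G$.
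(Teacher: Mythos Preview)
Your argument is correct and in fact a touch cleaner than the paper's. The paper first splits $U$ into two halves $A$ and $B$ of size $|U|/2$, takes a maximum $c$-colored matching between $A$ and $B$, and then applies Lemma~\ref{lm1} to the two leftover sets $A\setminus A'$ and $B\setminus B'$ to force each to have size at most $\lambda n/d$; this yields a matching of size at least $|U|/2-\lambda n/d$. You skip the bipartite detour: by taking a maximum $c$-matching in all of $U$ and observing that the uncovered set $I$ is $c$-independent, a single application of Lemma~\ref{lm1} with $A=B=I$ gives $|I|\le \lambda n/d$, hence $|M|=(|U|-|I|)/2\ge |U|/2-\lambda n/(2d)$. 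So your route uses the same key input (Lemma~\ref{lm1}) but avoids the artificial partition and even saves a factor of $2$ in the error term; the paper's version only recovers the weaker bound stated in the lemma.
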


\begin{proof}
We partition the vertex set of $U$ into two sets, $A$ and $B$, such that $|A|=|B|=\frac{|U|}{2}$. Choose as large a matching of color $c$ as possible between, say, $A'\subseteq A$ and $B'\subseteq B$. We have that the two sets $A\setminus A'$ and $B\setminus B'$ both have size at most $ \frac{\lambda n}{d}$. Otherwise, by Lemma \ref{lm1} we could increase the size of our matching. As a result, the number of disjoint $c$ colored edges in $U$ is at least
\[|A^{'}|=|B^{'}|\geq \frac{|U|}{2}-\frac{\lambda n}{d}\]
as required.
\end{proof}
\subsection{Proof of Theorem \ref{tree-thm1}}
The proof proceeds by strong induction on the number of edges in $T$. If $T$ contains no edges the theorem is clearly true; if $T$ is a star graph $K_{1, m}$, then $\sigma(G) = m + 1$ and the theorem is Lemma \ref{main5}. 

Now assume $T$ is not a star graph. Let $T^{'}$ be the graph produced by deleting all leaves of $T$. Since $T$ is not a star graph, $T{'}$ is a tree which has at least two leaves, we can choose $v$ be a leaf of $T^{'}$ such that there exists another leaf of $T^{'}$, say $w$, such that $\deg_{T} v \leq \deg_{T} w$. 
Suppose the set of leaves of $T$ connected to $v$ is $\{v_{1}, v_{2}, \dots, v_{y}\}$. Define the graph $T^*$ to be $T \setminus \{v_1,v_2,\dots,v_y\}$. By construction, $T^*$ is a tree with fewer edges than $T$ and $\sigma(T) = \sigma(T^*)\cdot(y + 1)$. By the inductive hypothesis
we have the number of disjoint copies of $T^*$ in $U$ denoted by $C_{T^*}$  is at least $\left(  \frac{r}{\sigma(T^*)}-1 \right)\cdot \frac{\lambda n}{d}$.

We are building our tree $T$ out of stars instead of edges. Let $W$ be the set of copies of $v$ in $U$. By disjointness $|W|=|C_{T^*}|$. Let $K_{1,y}$ be the star graph generated by $\{v, v_{1}, v_{2}, \dots, v_{y} \}$ where the root is $v$. Using Lemma \ref{main5}, there exists at least $\frac{|W|/\frac{\lambda n}{d}-y}{y+1} \cdot \frac{\lambda n}{d}$ disjoint copies of $K_{1,y}$ in $W$. For each copy of $K_{1,y}$ we can build our tree $T$ by adding the copies of $T^*$ that correspond to $v$. These are disjoint copies of $T$ because of the disjointness of $T^*$ and the disjointness of $K_{1,y}$. So there are at least       
\begin{align*}
    \frac{|W|/\frac{\lambda n}{d}-y}{y+1}\cdot \frac{\lambda n}{d}
    &=\frac{|C_{T^*}|/\frac{\lambda n}{d}-y}{y+1}\cdot \frac{\lambda n}{d}\\
    &\ge \frac{\left(\frac{r}{\sigma(T^*)}-1\right )-y}{y+1}\cdot \frac{\lambda n}{d}\\
    &= \left(\frac{r}{(y+1)\sigma(T^*)}-1 \right)\cdot \frac{\lambda n}{d}\\
    &= \left(\frac{r}{ \sigma(T)}-1 \right)\cdot \frac{\lambda n}{d}
\end{align*}
disjoint copies of $T$ as required.

\subsection{Proof of Theorem \ref{tree-thm12}}
The proof proceeds by induction on the number of edges on $T$. If $T$ contains no edges, the theorem is clearly true. If $T$ is an edge, then $|V(T)|=2$, the theorem is Lemma \ref{lm2}. 

So assume $T$ is a tree with $m$ vertices. Consider the subgraph $T^*$ of $T$ produced by deleting one leaf on vertex $x$. Let's say the edge we are just removing has color $c$. By construction $T^*$ is a tree with fewer edges than $T$, say $m-1$. By inductive hypothesis we have the collection of disjoint copies of $T^*$ in $U$ is at least $\frac{|U|}{m-1}-\frac{\lambda n}{d}$.

Choose $\frac{|U|}{m}$ copies of them arbitrarily and let this set of vertices be called $S$. This is possible since $|U|\ge m(m-1)\frac{\lambda n}{d}$. 

So $S$ has size $(m-1) \cdot\frac{|U|}{m}$. Now in these copies of $T^*$, denote by $A$ the set copies of $x$ to which we will be trying to add an edge of color $c$, so $|A|=\frac{|U|}{m}$. Let $B=U\setminus S$ so $|B|= |U|-(m-1) \cdot\frac{|U|}{m}=\frac{|U|}{m}$.

Choose as large of a matching color $c$ as possible between, say, $A'\subseteq A$ and $B'\subseteq B$, each matching creates a copy of $T$. Let $C$ and $D$ be the sets of vertices in $A\setminus A'$ and $B\setminus B'$ respectively. Then we have that $|C|=|D|\leq \frac{\lambda n}{d}$. Otherwise using Lemma \ref{lm1} we can find at least one $c$ colored edge between $C$ and $D$, which would increase the size of our matching. So the number of disjoint copies of $T$ is 
\[|A'|=|A|-|C|\geq \frac{|U|}{m}-\frac{\lambda n}{d},\]
as required.

\section{Acknowledgements}
T. Pham would like to
thank to the VIASM for the hospitality and for the excellent working conditions. M. Tait was partially supported by National Science Foundation grant DMS-2011553 and a Villanova University Summer Grant.

\bibliographystyle{amsplain}

\bibliography{bib}

\end{document}